\newtheorem{tht}{Theorem}
\newtheorem{thd}{Definition}
\newtheorem{thl}[tht]{Lemma}
\newtheorem{thp}[tht]{Proposition}
\newtheorem{thc}[tht]{Corollary}
\theoremstyle{definition}
\newtheorem{thex}{Example}
\newcommand{\cP}{{\mathcal{P}}}
\newcommand{\cS}{{\mathcal{S}}}
\newcommand{\cD}{{\mathcal{D}}}
\newcommand{\cJ}{{\mathcal{J}}}
\newcommand{\Hh}{{\mathcal{H}}}
\newcommand{\cG}{{\mathcal{G}}}
\newcommand{\cA}{{\mathcal{A}}}
\newcommand{\cM}{{\mathcal{M}}}
\newcommand{\Rep}{{\rm Rep}}
\newcommand{\dR}{\mathbb{R}}
\newcommand{\dN}{\mathbb{N}}
\newcommand{\dC}{\mathbb{C}}
\newcommand{\gt}{{\mathfrak{t}}}
\newcommand{\gA}{{\mathfrak{A}}}
\newcommand{\ii}{\mathrm{i}}
\newcommand{\ov}{\overline}
\author{Konrad Schm\"udgen}
\address{Universit\"at Leipzig, Mathematisches Institut, Augustusplatz 10/11, D-04109 Leipzig, Germany}
\email{schmuedgen@math.uni-leipzig.de}
\date{}
\begin{document}


\maketitle

\begin{abstract}
Using unbounded Hilbert space representations basic results on the transition probability of positive linear functionals $f$ and $g$ on a  unital $*$-algebra  are obtained. The main assumption is the essential self-adjointness of  GNS representations $\pi_f$ and $\pi_g$. Applications to functionals given by density matrices and  by integrals  and to vector functionals on the Weyl algebra are given. 
\end{abstract}

\textbf{AMS  Subject  Classification (2000)}.
 46L50, 47L60; 81P68.\\

\textbf{Key  words:} transition probability, non-commutative probability, unbounded representations

\section {Introduction}
Let $f$ and $g$ be  
states on a unital $*$-algebra $A$. Suppose that these states are realized as vectors states  of  a common $*$-representation $\pi$ of $A$ on a Hilbert space  with unit vectors $\varphi$ and $\psi$, respectively, that is, $f(a)=\langle \pi(a)\varphi,\varphi\rangle$ and $g(a)=\langle \pi(a)\psi,\psi\rangle$ for $a\in A$. In quantum physics the number $|\langle \varphi,\psi\rangle|^2$ is then interpreted as the transition probability from $f$ to $g$  in these vector states. The (abstract) {\it transition probability} $P_A(f,g)$ is defined as the supremum of values $|\langle \varphi,\psi\rangle|^2$, where the supremum is taken over all realizations of $f$ and $g$ as vectors states in some common $*$-representation  of $A$. This definition was introduced  by A. Uhlmann \cite{uhlmann76}. The square root $\sqrt{P_A(f,g)}$ is often called  fidelity in the literature  \cite{alberti03},  \cite{josza}. 

The transition probability is  related to  other important topics such as the Bures distance \cite{bures}, Sakai's non-commutative Radon-Nikodym theorem \cite{araki72} and the geometric mean of Pusz and Woronowicz \cite{pw}. It was extensively studied in the finite dimensional case (see e.g. the monograph \cite{bengtson06}) and a number of results have been derived for $C^*$-algebras and von Neumann algebras (see e.g. \cite{araki72}, \cite{arraggio82}, \cite{alberti83},\cite{alberti92}, \cite{albuhl00}, \cite{alberti03}, \cite{yama}).

The aim of the  present paper is to study the transition  probability $P_A(f,g)$ for positive linear functionals $f$ and $g$ on a general unital $*$-algebra $A$. Then, in contrast to the case of $C^*$-algebras, the corresponding $*$-representations of $A$ act by unbounded operators in general and a number of   technical problems of   
 {\it unbounded} representation theory 
 on Hilbert space  come up. Dealing with these difficulties in a proper way is a main purpose  of this paper. In  section \ref{unbrep} we therefore  collect all basic definitions and facts on unbounded Hilbert space representations that will be used throughout this paper.

In section \ref{mainres} we state and prove our main  theorems about the transition  probability $P_A(f,g)$ for a general $*$-algebras. The crucial assumption for  these results is the essential self-adjointness of the GNS representations $\pi_f$ and $\pi_g$. This means that we restrict ourselves   to a class of "nice" functionals. In contrast we do not restrict the $*$-representation $\pi$ where the functionals $f$ and $g$ are realized as vector functionals. 
(In some results it is assumed that $\pi$ is closed or biclosed, but this is no restiction of generality, since any $*$-representation  has a closed or biclosed extension.) 

In section \ref{appl} we apply  Theorem \ref{sumofgnspifg1} from section \ref{mainres} to  generalize two standard formulas (\ref{fsfttrans0}) and (\ref{commutativecase}) for the transition  probability to the unbounded case; these  formulas concern trace functionals $f(\cdot)={\rm Tr}\, \rho(\cdot) t$\, and functionals  on commutative $*$-algebras given by integrals. A simple counter-example based on the Hamburger moment problem shows that these  formulas can fail if the assumption of essential self-adjointness of  GNS representations is omitted. In section \ref{vectorweyl} we determine the transition  probability of  positive functionals on the Weyl algebra given by certain functions from $C_0^\infty(\dR)$. In this case both GNS representations $\pi_f$ and $\pi_g$ are {\it not} essentially self-adjoint and the corresponding formula for $P_A(f,g)$ is in general  different from the standard formula (\ref{fsfttrans0}).

Throughout this paper we suppose that $A$ is a complex unital $*$-algebra. The involution of $A$ is denoted by $a\to a^+$ and the unit element of $A$  by $1$. Let $\cP(A)$ be the set of all positive linear functionals on $A$.  Recall that a linear functional $f$ on $A$ is called positive if $f(a^+a)\geq 0$ for all $a\in A$. Let $\sum A^2$ be the set of all finite sum of squares $a^+a$, where $a\in A$. All notions and  facts on von Neumann algebras and on unbounded operators used in this paper can be found in \cite{kr} and \cite{schmu12}, respectively.
\section{Basics on Unbounded Representations}\label{unbrep} 
Proofs  of all unproven facts stated in this section and more details can be found in the author's monograph \cite{schmu90}. Proposition \ref{adrepbasic2} below is a new result that might of interest in itself.

Let $(\cD,\langle\cdot,\cdot\rangle)$ be a unitary space and $(\Hh,\langle\cdot,\cdot\rangle)$  the Hilbert space  completion of $(\cD,\langle\cdot,\cdot\rangle)$. 
We denote by $L(\cD)$   the algebra of all linear operators\, $a:\cD\to \cD$, by $I_\cD$ the identity map of $\cD$ and by ${\bf B}(\Hh)$ the $*$-algebra of all bounded linear operators on $\Hh$. 
\begin{thd}
A\, {\em representation}\, of  $A$ on  $\cD$ is an algebra homomorphism $\pi$ of $A$ into the algebra $L(\cD)$ such that $ \pi(1)=I_\cD$ and $\pi(a)$ is a closable operator on $\Hh$ for  $a\in A$. We then write $\cD(\pi):=\cD$ and $\Hh(\pi):=\Hh$.

A\, {\em $*$-representation}\, $\pi$ of  $A$ on  $\cD$  is a representation $
\pi$ satisfying 
\begin{align}\label{defadjointre1}
\langle \pi(a)\varphi,\psi\rangle=\langle \varphi,\pi(a^+)\psi\rangle~~~~{\rm for}~~~a\in A,~\varphi, \psi\in \cD(\pi).
\end{align}
\end{thd}
Let\, $\pi$\, be a representation  of  $A$. Then
\begin{align}\label{defadjointre}
\cD(\pi^\ast):=\cap_{a\in A}\, \cD(\pi(a)^\ast)~~~{\rm and}~~~
 \pi^\ast(a):=\pi(a^+)^\ast\lceil\cD(\pi^\ast) ~~~{\rm for}~~~~a\in A,
\end{align}
 defines a representation $\pi^\ast$ of\,  $A$ on  $\cD(\pi^\ast)$, called the\, {\it adjoint representation}\, to $\pi$. Clearly, $\pi$ is a $*$-representation  if and only if\, $\pi\subseteq \pi^*$. 
 
If\, $\pi$ is a $\ast$-representation of  $A$, then
\begin{align}
 \cD(\, \ov{\pi}\, )&:=\cap_{a\in A}\, \cD(\, \ov{\pi(a)}\, )~~~{\rm and}~~~~ \ov{\pi}(a):=\ov{\pi(a)} \lceil\cD(\, \ov{\pi}\, ) ,~~a\in A,\label{defclosurere}\\
 \cD(\pi^{**})&:=\cap_{a\in A}\, \cD(\pi^*(a)^\ast)~~~{\rm and}~~~
 \pi^{**}(a):=\pi^*(a^+)^\ast\lceil\cD(\pi^{**}) ,~~a\in A,\label{defbiadjointre}
\end{align}
 are $\ast$-representations  $\pi^*$ and $\pi^{**}$\, of\, $A$,  called the\, {\it closure}\, resp. the\, {\it biclosure} of\, $\pi$. Then 
\begin{align*}
\pi\subseteq \ov{\pi}\subseteq \pi^{**}\subseteq \pi^\ast.
\end{align*}
If $\pi$ is  a $*$-representation, then  $\Hh(\pi)=\Hh(\pi^*)$. But for a representation $\pi$ it may happen that the domain $\cD(\pi^\ast)$ is {\it not} dense in $\Hh(\pi)$, that is,  $\Hh(\pi^*)\neq \Hh(\pi)$.
\begin{thp}\label{adrepbasic2}
Let\, $\pi$\, and\, $\rho$\, be representations of  a $*$-algebra $A$ such that\, $\rho\subseteq \pi$. Then:
\begin{itemize} 
\item[\rm (i)]~ $P_{\Hh(\rho)}\pi^*(a)\subseteq \rho^*(a)P_{\Hh(\rho)}$,  where $P_{\Hh(\rho)}$\, is the projection of\, $\Hh(\pi)$ onto $\Hh(\rho)$. 
\item[\rm (ii)]~ If\, $\Hh(\rho)=\Hh(\pi)$,\, then\, $\pi^*\subseteq \rho^*$. 
\item[\rm (iii)]~~ $\rho^{**}\subseteq \pi^{**}$.
\end{itemize}
\end{thp}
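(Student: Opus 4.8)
The plan is to prove the three items in the order they are stated, using the adjoint as the fundamental object and pushing everything through the defining relation $\langle \pi(a)\varphi,\psi\rangle = \langle\varphi,\pi(a^+)\psi\rangle$ restricted to the relevant domains. For item (i): fix $a\in A$ and $\xi\in\cD(\pi^*(a))\subseteq\Hh(\pi)$. I want to show $P_{\Hh(\rho)}\xi\in\cD(\rho^*(a))$ and $\rho^*(a)P_{\Hh(\rho)}\xi = P_{\Hh(\rho)}\pi^*(a)\xi$. The key observation is that $\rho\subseteq\pi$ means $\cD(\rho)\subseteq\cD(\pi)$ with $\rho(b)=\pi(b)\lceil\cD(\rho)$ for all $b\in A$; in particular $\cD(\rho)\subseteq\Hh(\rho)$ and $\rho$ maps $\cD(\rho)$ into $\cD(\rho)\subseteq\Hh(\rho)$. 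So for any $\varphi\in\cD(\rho)$,
\begin{align*}
\langle \rho(a^+)\varphi,\, P_{\Hh(\rho)}\xi\rangle
= \langle \rho(a^+)\varphi,\, \xi\rangle
= \langle \pi(a^+)\varphi,\, \xi\rangle
= \langle \varphi,\, \pi^*(a)\xi\rangle
= \langle \varphi,\, P_{\Hh(\rho)}\pi^*(a)\xi\rangle,
\end{align*}
where the first and last equalities use $\varphi,\rho(a^+)\varphi\in\cD(\rho)\subseteq\Hh(\rho)$ together with self-adjointness of the projection $P_{\Hh(\rho)}$. By definition of the adjoint operator $\rho(a^+)^*$ this exactly says $P_{\Hh(\rho)}\xi\in\cD(\rho(a^+)^*)=\cD(\rho^*(a))$ and $\rho^*(a)P_{\Hh(\rho)}\xi = P_{\Hh(\rho)}\pi^*(a)\xi$, which is (i). I expect the only subtle point here to be bookkeeping about which Hilbert space the inner products live in; since $\Hh(\rho)$ is a closed subspace of $\Hh(\pi)$ (as $\cD(\rho)\subseteq\cD(\pi)$ carries the restricted inner product), all four inner products may be read in $\Hh(\pi)$, and $P_{\Hh(\rho)}$ is a genuine orthogonal projection there.

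For item (ii): if $\Hh(\rho)=\Hh(\pi)$, then $P_{\Hh(\rho)}$ is the identity on $\Hh(\pi)$, so (i) reads $\pi^*(a)\subseteq\rho^*(a)$ for every $a\in A$. Intersecting domains over $a\in A$ gives $\cD(\pi^*)=\cap_a\cD(\pi(a)^*)\subseteq\cap_a\cD(\rho(a)^*)=\cD(\rho^*)$ and the actions agree on $\cD(\pi^*)$, i.e. $\pi^*\subseteq\rho^*$. (One should note that here $\rho$ and $\pi$ are $*$-representations in the sense needed for $\rho^*,\pi^*$ to themselves be representations on dense domains — in fact density of $\cD(\rho^*)$ follows because it contains $\cD(\rho)$, which is dense in $\Hh(\rho)=\Hh(\pi)$.)

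For item (iii): apply (i) twice, or rather apply (ii) in a form that survives passage to closures. The cleanest route is: from (i), $P_{\Hh(\rho)}\pi^*(a^+)\subseteq\rho^*(a^+)P_{\Hh(\rho)}$, and I want to dualize this to get an inclusion between $\pi^{**}$ and $\rho^{**}$. Here the main obstacle is that $\rho$ and $\pi$ need not be $*$-representations, so $\pi^*$ may have non-dense domain and $\pi^{**}=(\pi^*)^*$ must be handled with care. The argument I intend: let $\eta\in\cD(\pi^{**}(a))\subseteq\Hh(\pi^*)\subseteq\Hh(\pi)$; I claim $P_{\Hh(\rho)}\eta\in\cD(\rho^{**}(a))$ with $\rho^{**}(a)P_{\Hh(\rho)}\eta=P_{\Hh(\rho)}\pi^{**}(a)\eta$. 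Actually, since $\cD(\rho)\subseteq\cD(\pi)$ and hence $\Hh(\rho)\subseteq\Hh(\pi)$, and since $\rho\subseteq\pi\subseteq\pi^{**}$ forces — together with (ii)-type reasoning inside $\Hh(\rho)$ — one gets $\ov\rho\subseteq\ov\pi$ directly from $\cD(\ov\rho)=\cap_a\cD(\ov{\rho(a)})$ and $\ov{\rho(a)}\subseteq\ov{\pi(a)}$ (closures of restrictions). The inclusion $\ov\rho\subseteq\ov\pi$ is immediate and free. For the biclosure, observe $\rho^{**}$ is by definition the biclosure, and one shows $\rho^{**}=(\ov\rho)^{**}$ is the smallest biclosed (hence self-adjoint-domain-closed) extension of $\ov\rho$; since $\ov\pi\subseteq\pi^{**}$ is biclosed and extends $\ov\rho$, minimality yields $\rho^{**}\subseteq\pi^{**}$. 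I anticipate the delicate part of (iii) is precisely justifying the minimality/functoriality property of the biclosure under inclusions when the intermediate adjoint domains are not dense; the safe workaround is to run the explicit projection computation with $\pi^*$ in place of $\pi$ and $\rho^*$ in place of $\rho$, using part (i) applied to the pair $(\rho^*,\pi^*)$ — note $\rho^*\subseteq\pi^*$ is false in general, so instead one uses that (i) gives $P_{\Hh(\rho)}\pi^{**}(a)\subseteq\rho^{**}(a)P_{\Hh(\rho)}$ by the same four-line inner-product identity with $\pi^{**},\rho^{**}$ replacing $\pi^*,\rho^*$ and with test vectors drawn from $\cD(\rho^*)$, and then one must separately verify $P_{\Hh(\rho)}$ acts as the identity on $\cD(\pi^{**}(a))$ — which it does not in general, so the honest statement (iii) is only an inclusion of the biclosures, obtained by the minimality argument above rather than from (i).
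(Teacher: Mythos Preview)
Your arguments for (i) and (ii) are essentially the paper's, modulo one notational slip: you write $\cD(\rho(a^+)^*)=\cD(\rho^*(a))$, but by definition $\cD(\rho^*(a))=\cD(\rho^*)=\bigcap_{b\in A}\cD(\rho(b)^*)$, which is in general strictly smaller than $\cD(\rho(a^+)^*)$. This is harmless, because your $\xi$ lies in $\cD(\pi^*)$ and running your four-line identity for \emph{every} $a$ gives $P\xi\in\bigcap_a\cD(\rho(a^+)^*)=\cD(\rho^*)$, which is what the operator inclusion $P\pi^*(a)\subseteq\rho^*(a)P$ actually requires.

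Part (iii), however, has a genuine gap. Your final argument rests on the claim that $\rho^{**}$ is the \emph{smallest} biclosed extension of $\rho$, applied to the biclosed extension $\pi^{**}\supseteq\rho$. But ``$\rho^{**}$ is contained in every biclosed extension of $\rho$'' is precisely the assertion that $\rho\subseteq\sigma$ with $\sigma$ biclosed implies $\rho^{**}\subseteq\sigma^{**}=\sigma$; this \emph{is} statement (iii), so the minimality argument is circular. You sense this and sketch a workaround, but then aim at the wrong target: you try to establish $P\pi^{**}(a)\subseteq\rho^{**}(a)P$ and note that $P$ fails to be the identity on $\cD(\pi^{**})$. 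Even if that inclusion held, it would yield $P\,\cD(\pi^{**})\subseteq\cD(\rho^{**})$, which is the wrong direction.

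The paper's proof reverses the roles. One starts with $\xi\in\cD(\rho^{**})$, which lies in $\Hh(\rho)$ so that $P\xi=\xi$ \emph{automatically}, and shows $\xi\in\cD(\pi^{**})$ by pairing with arbitrary $\psi\in\cD(\pi^*)$. Part (i) gives $P\psi\in\cD(\rho^*)$ and $\rho^*(a)P\psi=P\pi^*(a)\psi$, whence
\[
\langle\pi^*(a)\psi,\xi\rangle=\langle P\pi^*(a)\psi,\xi\rangle=\langle\rho^*(a)P\psi,\xi\rangle=\langle P\psi,\rho^{**}(a^+)\xi\rangle=\langle\psi,\rho^{**}(a^+)\xi\rangle,
\]
using $\xi\in\cD(\rho^{**})\subseteq\cD(\rho^*(a)^*)$ for the third equality and $\rho^{**}(a^+)\xi\in\cD(\rho^{**})\subseteq\Hh(\rho)$ for the last. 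Thus $\xi\in\cD(\pi^*(a)^*)$ for every $a$, so $\xi\in\cD(\pi^{**})$ with $\pi^{**}(a^+)\xi=\rho^{**}(a^+)\xi$. The point you missed is that one should test membership in $\cD(\pi^{**})$ (pairing against $\cD(\pi^*)$, where (i) applies directly) rather than try to push vectors of $\cD(\pi^{**})$ into $\cD(\rho^{**})$.
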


\begin{proof} 
(i): Let $P$ denote the projection  $P_{\Hh(\rho)}$ and fix $\psi\in \cD(\pi^*)$. Let $\varphi\in \cD(\rho)$ and $a\in A$. Using the assumption $\rho\subseteq \pi$  we obtain
\begin{align*}
\langle & \rho(a^+)\varphi ,P\psi\rangle =\langle P\rho(a^+)\varphi,\psi\rangle =\langle \rho(a^+)\varphi,\psi\rangle =\langle \pi(a^+)\varphi,\psi\rangle\\&= \langle \varphi,\pi(a^+)^*\psi\rangle=
\langle \varphi,\pi^*(a)\psi\rangle =\langle P\varphi,\pi^*(a)\psi\rangle=\langle 
\varphi,P\pi^*(a)\psi\rangle.
 \end{align*}
From this equality it follows that\, 
$P\psi \in \cD(\rho(a^+)^*)$\, and\, $\rho(a^+)^*P\psi=P\pi^*(a)\psi$. Hence\,
$\psi\in \cap_{b\in A}\, \cD(\rho(b)^*)=\cD(\rho^*)$\, and\, $\rho^*(a)P\psi=P\pi^*(a)\psi$. This proves that $P\pi^*(a)\subseteq \rho^*(a)P.$ 
 
 (ii) follows at once from (i), since $P=I$  by the assumption $\Hh(\rho)=\Hh(\pi)$.
 
 (iii):
 Let\, $\xi\in \cD(\rho^{**})$ and $\psi\in \cD(\pi^{*})$. Since\, $\Hh(\rho^{**})\subseteq \Hh(\rho^*)\subseteq \Hh(\rho)$\, by definition,  $P\xi=\xi$.  By (i),\, 
 $P\psi \in \cD(\rho^*)$\, and\, 
 $\rho^*(a)P\psi=P\pi^*(a)\psi$. Therefore, we derive
 \begin{align*}
 \langle & \pi^*(a)\psi ,\xi\rangle = 
 \langle  \pi^*(a)\psi ,P\xi\rangle =\langle P\pi^*(a)\psi,\xi\rangle\\& =\langle \rho^*(a)P\psi,\xi\rangle =\langle \psi,P\rho^{*}(a)^*\xi\rangle= \langle \psi,\rho^{**}(a^+)\xi\rangle
 \end{align*}
for  $a\in A$. Hence $\xi \in \cD(\pi^*(a)^*)$\,  and\, $\pi^*(a)^*\xi= \rho^{**}(a^+)\xi$\, for $a\in A$. This implies that\, $\xi\in\cD(\pi^{**})$\,  and\, $\pi^{**}(a^+)\xi=\pi^*(a)^*\xi= \rho^{**}(a^+)\xi$. Thus we have proved that\, $\rho^{**}\subseteq \pi^{**}$.
\end{proof}

\begin{thd} 
A $\ast$-representation\, $\pi$\, of a $\ast$-algebra $A$\,  is called\\ 
-- {\rm closed}\,  if $\pi=\ov{\pi}$,\, or equivalently, if\, $\cD(\pi)=\cD(\, \ov{\pi}\, )$,\\ 
-- {\rm biclosed}\,  if $\pi=\pi^{**}$,\, or equivalently, if\, $\cD(\pi)=\cD(\pi^{**} )$,\\ 
-- {\rm self-adjoint}\, if $\pi=\pi^\ast$,\, or equivalently, if\, $\cD(\pi)=\cD(\pi^\ast)$,\\
-- {\rm essentially self-adjoint}\, if\, $\pi^*$ is self-adjoint, that is, if\, $\pi^*=\pi^{**}$,\,  or equivalently, if\, $\cD(\pi^{**})=\cD(\pi^{*})$.
\end{thd}
Remark. 
It should be emphasized that the preceding definition of  {\it essential self-adjointness} is different  form the definition given in   \cite{schmu90}. In \cite[Definition 8.1.10]{schmu90},  a $*$-representation was called essentially self-adjoint if\, $\ov{\pi}$\, is self-adjoint, that is, if\, $\ov{\pi}=\pi^*$. 
\smallskip

Let $\pi$ be a $*$-representation. Then the $*$-representations $\ov{\pi}$ and $\pi^{**}$ are closed, $\pi^{**}$ is biclosed and $(\ov{\pi})^*=\pi^*$. It may  happen that $\ov{\pi}\neq \pi^{**}$, so  that $\ov{\pi}$ is closed, but not biclosed. The  locally convex topology on $\cD(\pi)$  defined by the family of seminorms $\{\|\cdot\|_a:=\|\pi(a)\cdot\|; a\in A\}$ is called the {\it graph topology} and denoted by $\gt_{\pi(A)}$. Then the $*$-representation $\pi$ is closed if and only if the locally convex space $\cD(\pi)[\gt_{\pi(A)}]$ is complete.

\begin{thp}\label{technicalself}
If\, $\pi_1$ is a self-adjoint  $\ast$-subrepresentation of a $*$-representation $\pi$ of $A$, then there exists a $*$-representation $\pi_2$ of $A$ on the Hilbert space $\Hh(\pi)\ominus \Hh(\pi_1)$ such that $\pi=\pi_1\oplus \pi_2$.
\end{thp}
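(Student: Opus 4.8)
The plan is to show that the closed subspace $\Hh(\pi_1)$ of $\Hh(\pi)$ reduces $\pi$ and then to let $\pi_2$ be the complementary component. Write $P$ for the orthogonal projection of $\Hh(\pi)$ onto $\Hh(\pi_1)$.

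First I would apply Proposition \ref{adrepbasic2}(i) with $\rho:=\pi_1$. Since $\pi_1\subseteq\pi$, it yields $P\pi^*(a)\subseteq\pi_1^*(a)P$ for $a\in A$, and as $\pi_1$ is self-adjoint, $\pi_1^*=\pi_1$, so $P\pi^*(a)\subseteq\pi_1(a)P$. Restricting this to $\cD(\pi)\subseteq\cD(\pi^*)$ (recall $\pi\subseteq\pi^*$) and using $\pi_1\subseteq\pi$ once more, I obtain: for every $\varphi\in\cD(\pi)$ and $a\in A$ one has $P\varphi\in\cD(\pi_1)$ and $P\pi(a)\varphi=\pi(a)P\varphi$. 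In particular $P\cD(\pi)\subseteq\cD(\pi_1)\subseteq\cD(\pi)$; the reverse inclusion $\cD(\pi_1)\subseteq P\cD(\pi)$ is obvious, since an element of $\cD(\pi_1)$ lies in $\Hh(\pi_1)\cap\cD(\pi)$ and hence is fixed by $P$. Thus $P\cD(\pi)=\cD(\pi_1)$ and $\Hh(\pi_1)$ reduces $\pi$.

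Next I set $\Hh_2:=\Hh(\pi)\ominus\Hh(\pi_1)$, $\cD_2:=(I-P)\cD(\pi)=\cD(\pi)\cap\Hh_2$, and $\pi_2(a):=\pi(a)\lceil\cD_2$. Because $P\pi(a)\psi=\pi(a)P\psi=0$ for $\psi\in\cD_2$, the domain $\cD_2$ is invariant under all $\pi_2(a)$; $\cD_2$ is dense in $\Hh_2$ (as $I-P$ is continuous and $\cD(\pi)$ is dense in $\Hh(\pi)$); each $\pi_2(a)$ is closable on $\Hh_2$ since $\pi(a)$ is closable on $\Hh(\pi)$; $\pi_2$ is an algebra homomorphism with $\pi_2(1)=I_{\cD_2}$; and the relation $\langle\pi_2(a)\varphi,\psi\rangle=\langle\varphi,\pi_2(a^+)\psi\rangle$ on $\cD_2$ is inherited from $\pi$. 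Hence $\pi_2$ is a $*$-representation of $A$ with $\Hh(\pi_2)=\Hh_2$. Finally, writing $\varphi=P\varphi+(I-P)\varphi$ for $\varphi\in\cD(\pi)$ gives $\cD(\pi)=\cD(\pi_1)\oplus\cD_2$ orthogonally, and $\pi(a)\varphi=\pi_1(a)P\varphi+\pi_2(a)(I-P)\varphi$ with the two summands lying in $\Hh(\pi_1)$ and $\Hh_2$ respectively; this is precisely the assertion $\pi=\pi_1\oplus\pi_2$.

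The only genuinely delicate step is the first one, namely deriving the domain inclusion $P\cD(\pi)\subseteq\cD(\pi_1)$ together with the commutation $P\pi(a)=\pi(a)P$ on $\cD(\pi)$; but this is exactly what Proposition \ref{adrepbasic2}(i) provides once self-adjointness of $\pi_1$ is used to replace $\pi_1^*$ by $\pi_1$. After that, verifying that $\pi_2$ is a bona fide $*$-representation and that the orthogonal decomposition holds is routine.
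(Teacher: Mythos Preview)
Your argument is correct. The paper itself does not give a proof of this proposition but simply cites \cite[Corollary 8.3.3]{schmu90}, whereas you supply a self-contained argument using only Proposition \ref{adrepbasic2}(i), which \emph{is} proved in the paper. This is a nice observation: the new result Proposition \ref{adrepbasic2} already contains enough to derive Proposition \ref{technicalself} without appealing to the monograph. The substance of your proof---applying Proposition \ref{adrepbasic2}(i) with $\rho=\pi_1$, invoking self-adjointness of $\pi_1$ to turn $\pi_1^*$ into $\pi_1$, and thereby obtaining $P\cD(\pi)\subseteq\cD(\pi_1)$ and $P\pi(a)=\pi(a)P$ on $\cD(\pi)$---is exactly the reducing-subspace argument one expects, and the subsequent verification that $\pi_2$ is a $*$-representation with $\pi=\pi_1\oplus\pi_2$ is routine as you say. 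One very minor remark: your justification that $\pi_2(a)$ is closable on $\Hh_2$ because $\pi(a)$ is closable on $\Hh(\pi)$ is fine, but note that closability already follows automatically from the $*$-relation (\ref{defadjointre1}) which you verify for $\pi_2$, since then $\pi_2(a)^*\supseteq\pi_2(a^+)$ is densely defined.
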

\begin{proof} \cite[Corollary 8.3.3]{schmu90}. \end{proof}
For a $*$-representation of $A$ we define two {\it  commutants}
\begin{align*}
\pi(A)^\prime_s&=\{ T\in {\bf B}(\Hh(\pi)): T\varphi\in \cD(\pi),~~ T\pi(a)\varphi=\pi(a)T\varphi ~~~~ {\rm for} ~~~ a\in A, \varphi\in \cD(\pi)\},\\ \pi(A)^\prime_{ss}&=\{ T\in {\bf B}(\Hh(\pi)): T\, \ov{\pi(a)}\subseteq \ov{\pi(a)}\, T,~~T^*\, \ov{\pi(a)}\subseteq \ov{\pi(a)}\, T^*\, \}.
\end{align*}
The  symmetrized commutant\, $\pi(A)^\prime_{ss}$ is always a von Neumann algebra. If $\pi$ is closed, then  
\begin{align}\label{strongcom}
\pi(A)_{ss}^\prime=\pi(A)_s^\prime\cap(\pi(A)_s^\prime)^\ast.
\end{align}

If $\pi_1$ and $\pi_2$ are representations of $A$, the {\it interwining space} $I(\pi_1,\pi_2)$ consists of all bounded linear operators $T$ of $\Hh(\pi_1)$ into $\Hh(\pi_2)$ satisfying
\begin{align}\label{definterwiner}
T\varphi \in \cD(\pi_2)\quad {\rm and}\quad T\pi_1(a)\varphi=\pi_2(a)T\varphi \quad {\rm for}\quad a\in A, \varphi \in \cD(\pi_1).
\end{align}

The $*$-representation $\pi_f$  in the following proposition is called  the {\it GNS representation} associated with the positive linear functional $f$. 

\begin{thp}\label{gnsprop}
Suppose that $f\in \cP(A)$. Then there exists a $*$-representation $\pi_f$ with algebraically cyclic vector $\varphi_f$, that is, $\cD(\pi_f)=\pi_f(A)\varphi_f$, such that
\begin{align*}
f(a)=\langle \pi_f(a)\varphi_f,\varphi_f\rangle, ~~~ a\in A.
\end{align*}
If $ \pi$ is another $*$-representation of $A$ with algebraically cyclic vector $\varphi$ such that 
$f(a)=\langle \pi(a)\varphi,\varphi\rangle$ for all $ a\in A$, then there exists a unitary operator $U$ of $\Hh(\pi)$ onto $\Hh(\pi_f)$ such that 
$U\cD(\pi)=\cD(\pi_f)$ and
$\pi_f(a)=U^*\pi(a)U$ for $a\in A$.
\end{thp}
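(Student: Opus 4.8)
The plan is to carry out the classical GNS construction, adding a word about closability so that the definition of a representation from Section~\ref{unbrep} is actually met. First I would equip $A$ with the positive semidefinite sesquilinear form $\langle a,b\rangle_f:=f(b^+a)$; positivity of $f$ is precisely the statement $\langle a,a\rangle_f\geq 0$. Put $N_f:=\{a\in A:f(a^+a)=0\}$. The Cauchy--Schwarz inequality for $\langle\cdot,\cdot\rangle_f$ gives $N_f=\{a\in A:f(b^+a)=0~\text{for all}~b\in A\}$, and from this description one sees at once that $N_f$ is a left ideal of $A$. Hence the quotient $\cD_f:=A/N_f$ is a unitary space with the well-defined inner product $\langle a+N_f,b+N_f\rangle:=f(b^+a)$; let $\Hh_f$ be its Hilbert space completion.

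Next I would set $\pi_f(a)(b+N_f):=ab+N_f$. Since $N_f$ is a left ideal this is well defined and maps $\cD_f$ into $\cD_f$, and $\pi_f$ is plainly an algebra homomorphism with $\pi_f(1)=I_{\cD_f}$. The identity $\langle\pi_f(a)(b+N_f),c+N_f\rangle=f(c^+ab)=f((a^+c)^+b)=\langle b+N_f,\pi_f(a^+)(c+N_f)\rangle$ verifies (\ref{defadjointre1}) on $\cD_f$. In particular $\pi_f(a^+)\subseteq\pi_f(a)^*$, so each $\pi_f(a)^*$ is densely defined and hence $\pi_f(a)$ is closable; thus $\pi_f$ is indeed a $*$-representation in the sense of Section~\ref{unbrep}. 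Finally, put $\varphi_f:=1+N_f$. Then $\pi_f(a)\varphi_f=a+N_f$, so $\pi_f(A)\varphi_f=\cD_f=\cD(\pi_f)$, that is, $\varphi_f$ is algebraically cyclic, and $\langle\pi_f(a)\varphi_f,\varphi_f\rangle=f(a)$ for all $a\in A$.

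For the uniqueness assertion, given another pair $(\pi,\varphi)$ as in the statement, I would define $U_0:\cD(\pi)\to\cD(\pi_f)$ on the algebraically cyclic domain by $U_0(\pi(a)\varphi):=\pi_f(a)\varphi_f=a+N_f$. Using the $*$-property of $\pi$ one computes $\|\pi(a)\varphi-\pi(a')\varphi\|^2=f((a-a')^+(a-a'))=\|(a-a')+N_f\|^2$, which shows in one stroke that $U_0$ is well defined, isometric, and, since $\varphi$ is algebraically cyclic, a bijection onto $\cD(\pi_f)$. Hence $U_0$ extends to a unitary $U:\Hh(\pi)\to\Hh(\pi_f)$ with $U\cD(\pi)=\cD(\pi_f)$. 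The relation $U\pi(a)\pi(b)\varphi=U\pi(ab)\varphi=\pi_f(ab)\varphi_f=\pi_f(a)U\pi(b)\varphi$, valid for all $a,b\in A$, yields $U\pi(a)=\pi_f(a)U$ on $\cD(\pi)$, which is the asserted unitary equivalence.

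None of these steps is deep: once the definitions are unwound, the argument is the familiar one from the $C^*$-case. The only point at which the unbounded framework has to be addressed is the verification that each $\pi_f(a)$ is closable, and that, as noted above, is an immediate consequence of the $*$-relation (\ref{defadjointre1}); so I do not expect a genuine obstacle, only the bookkeeping that $\cD_f$ is the purely algebraic quotient $A/N_f$ (not its completion) and that the intertwining unitary is first built on these dense algebraic domains and only then extended.
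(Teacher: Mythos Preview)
Your argument is correct and is precisely the standard GNS construction, with the appropriate remark that closability of each $\pi_f(a)$ follows from the $*$-relation (\ref{defadjointre1}). The paper does not actually prove this proposition; it merely cites \cite[Theorem 8.6.4]{schmu90}, and your write-up is exactly the kind of proof one finds there. (One cosmetic point: your intertwining relation $U\pi(a)=\pi_f(a)U$ gives $\pi_f(a)=U\pi(a)U^*$, which is the correct formula; the version $\pi_f(a)=U^*\pi(a)U$ printed in the statement appears to be a typo, since with $U:\Hh(\pi)\to\Hh(\pi_f)$ the operator $U^*\pi(a)U$ acts on the wrong space.)
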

\begin{proof} \cite[Theorem 8.6.4]{schmu90}.
\end{proof}
We study some of  the preceding notions by a   simple but instructive example.
\begin{thex}\label{hambmp} ({\it One-dimensional Hamburger moment problem})\\
Let $A$ by the polynomial $*$-algebra $\dC[x]$ with involution determined by $x^+:=x$. We denote by $M(\dR)$ the set of positive Borel measures $\mu$ such that $p(x)\in L^1(\dR,\mu)$ for all $p\in \dC[x]$. 
The number $s_n=\int x^n d\mu(x)$ is the $n$-th moment  and the sequence $s(\mu)=(s_n)_{n\in \dN_0}$ is called the moment sequence of a measure $\mu\in M(\dR)$. The moment sequence $s(\mu)$, or likewise the measure $\mu$, is called {\it determinate}, if the moment sequence $s(\mu)$ determines the  measure $\mu$ uniquely, that is, if $s(\mu)=s(\nu)$ for some $\nu\in M(\dR)$ implies that $\nu=\mu$. 

For $\mu\in M(\dR)$ we define a $*$-representation $\pi_\mu$ of $A=\dC[x]$ by\, $\pi_\mu(p)q =p\cdot q$\, for $p\in A$ and $q\in \cD(\pi_\mu):=\dC[x]$ on the Hilbert space $\Hh(\pi_\mu):=L^2(\dR,\mu)$. Put $f_\mu(p)=\int p(x)d\mu(x)$ for $p\in\dC[x]$. Obviously, the vector $1\in \cD(\pi_\mu):=\dC[x]$ is algebraically cyclic for $\pi_\mu$. Therefore, since  $f_\mu(p)=\langle \pi_\mu(p)1,1\rangle$ for $p\in \dC[x]$, $\pi_\mu$ is (unitarily equivalent to) the GNS representation $\pi_{f_\mu}$ of the positive linear functional $f_\mu$ on $A=\dC[x]$.
\smallskip

\noindent{\bf Statement:}\, {\it  The $*$-representation $\pi_\mu$ is essentially self-adjoint if and only if the moment sequence $s(\mu)$ is determinate.}
\begin{proof}
By a well-known result on the Hamburger moment problem (see e.g. \cite[Theorem 16.11]{schmu12}), the moment sequence $s(\mu)$ is determinate if and only if the operator $\pi_\mu(x)$ is essentially sel-adjoint. By \cite[Proposition 8.1(v)]{schmu90}, the latter holds if and only if the $*$-representation $(\pi_\mu)^*$ is  self-adjoint, that is, if $\pi_\mu$ is essentially self-adjoint.
\end{proof} 

By \cite[Proposition 8.1(vii)]{schmu90}, the closure $\ov{\pi}_\mu$  of the $*$-representation $\pi_\mu$ is self-adjoint if and only if all powers of the operator $\pi_\mu(x)$ are essentially self-adjoint. 
This is a rather strong condition. It is fulfilled (for instance) if $1$ is an analytic vector for the symmetric operator $\pi_\mu(x)$, that is, if there exists a constant $M>0$ such that
$$\|\pi_\mu(x)^n 1\|= s_{2n}^{1/2}\leq M^n n!\quad {\rm for}\quad n\in \dN.$$ 
From the theory of moment problems it is well-known that there are examples of measures $\mu\in M(\dR)$ for which $\pi_\mu(x)$ is essentially self-adjoint, but $\pi_\mu(x^2)$ is not.  In this case $\pi_\mu$ is essentially self-adjoint (which means that $(\pi_\mu)^*$ is self-adjoint), but the closure\, $\ov{\pi}_\mu$ of\, $\pi_\mu$ is {\it not} self-adjoint.
\end{thex}

\section{Main Results on Transition Probabilities}\label{mainres}

Let $\Rep A$ denote the family of all $*$-representations  of $A$. Given $\pi\in \Rep A$ and  $f\in\cP( A)$, let $S(\pi,f)$ be the set of all representing vectors for the functional $f$ in $\cD(\pi)$, that is, $S(\pi,f)$ is the set of vectors $\varphi\in \cD(\pi)$ such that $f(a)=\langle \pi(a)\varphi,\varphi\rangle$ for\,  $a\in A$. Note that $S(\pi,f)$ may be empty, but by Proposition \ref{gnsprop} for each  $f\in \cP(A)$  there exists a $*$-representation $\pi$ of $A$ for which $S(\pi,f)$ is not empty. If $f$ is a state, that is, if $f(1)=1$, then all vectors $\varphi\in S(\pi,f)$ are unit vectors.

\begin{thd}
For $f,g \in\cP(A)$
the {\rm transition probability}\, $P_A(f,g)$\, of\, $f$ and $g$
is defined by
\begin{align}\label{transtionprob}
P_A(f,g)= \sup_{\pi \in \Rep A}~ \sup_{\varphi\in  \cS(\pi,f),\psi\in \cS(\pi,g)}~|\langle \varphi,\psi\rangle|^2.
\end{align}
\end{thd}
If $A$ is a unital $*$-subalgebra of $B$ and $f,g\in \cP(B)$, then it is obvious that
\begin{align}\label{pafgpbfg}
P_B(f,g)\leq \cP_A(f\lceil A,g\lceil A),
\end{align} because the restriction of any $*$-representation of $B$ is  a $*$-representation of $A$. 

Let 
$\cG(f,g)$ denote the set of all linear functionals on $A$ satisfying
\begin{align}\label{defigset}
 |F(b^+ a)|^2\leq f(a^+a)g(b^+b)\quad {\rm for}\quad a,b\in A.
 \end{align} 
 Any vector $\varphi\in S(\pi,f)$ is called an {\it amplitude} of $f$ in the representation $\pi$ and any linear functional of $\cG(f,g)$ is called a {\it transition form} from $f$ to $g$.
If $\varphi\in  \cS(\pi,f)$ and $\psi\in \cS(\pi,g)$, then the functional $F_{\varphi,\psi}$ defined by 
\begin{align}\label{transvarphipsi} F_{\varphi,\psi}(a):=\langle\pi(a)\varphi, \psi\rangle,\quad a\in A,
\end{align} is a transition form from $f$ to $g$. Indeed,  for\, $a,b\in A$ we have
\begin{align*}
|F_{\varphi,\psi}(b^+ a)|^2&=|\langle\pi(b^+a)\varphi, \psi\rangle|^2=|\langle \pi(a)\varphi,\pi(b)\psi\rangle|^2\\ &\leq \|\pi(a)\varphi \|^2\|\pi(b)\psi\|^2
= f(a^+a)g(b^+b)
\end{align*}
which proves that $F_{\varphi,\psi}\in \cG(f,g)$. By Theorem  \ref{chartransprob} below, {\it each} functional $F\in \cG(f,g)$ arises in this manner. 
The number\, $|F_{\varphi,\psi}(1)|^2=|\langle\varphi, \psi\rangle|^2$\, is called the {\it transition probability}  of the amplitudes $\varphi$ and $\psi$ and by definition the transition  probability\, $P_A(f,g)$\, is  the supremum of all such transition amplitudes.

The following  description of the transition probability  was proved by P.M. Alberti for $C^*$-algebras \cite{alberti83} and by A. Uhlmann for general $*$-algebras \cite{uhlmann85}. 
\begin{tht}\label{chartransprob}
Suppose that $f, \in \cP(A)$. 
Then
\begin{align}\label{cxhartranspobality}
P_A(f ,g)= \sup_{F\in \cG(f,g)}~|F(1)|^2.
\end{align}
There exist a $*$-representation $\pi$ of $A$ and vectors $\varphi \in S(\pi,f)$ and $\psi \in S(\pi,g)$
such that
\begin{align}\label{extemepsi12}
P_A(f ,g)=|\langle \varphi,\psi\rangle|^2.
\end{align}
\end{tht}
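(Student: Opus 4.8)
The plan is to prove the inequality $P_A(f,g)\le \sup_{F\in\cG(f,g)}|F(1)|^2$ first, then the reverse inequality, and finally to extract from the maximizing construction a single representation $\pi$ with amplitudes $\varphi,\psi$ attaining the supremum. The first inequality is essentially done in the discussion preceding the statement: for any $\pi\in\Rep A$, $\varphi\in S(\pi,f)$, $\psi\in S(\pi,g)$, the functional $F_{\varphi,\psi}(a)=\langle\pi(a)\varphi,\psi\rangle$ lies in $\cG(f,g)$ and satisfies $|F_{\varphi,\psi}(1)|^2=|\langle\varphi,\psi\rangle|^2$; taking suprema over all such data gives $P_A(f,g)\le\sup_{F\in\cG(f,g)}|F(1)|^2$.

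For the reverse inequality, fix $F\in\cG(f,g)$; I want to build a $*$-representation $\pi$ and vectors $\varphi\in S(\pi,f)$, $\psi\in S(\pi,g)$ with $\langle\varphi,\psi\rangle=F(1)$. The natural device is a GNS-type construction on $A\oplus A$ using the sesquilinear form built from $f$, $g$, and $F$: on $A\oplus A$ define $\langle(a_1,a_2),(b_1,b_2)\rangle := f(b_1^+a_1)+g(b_2^+a_2)+F(b_2^+a_1)+\overline{F(a_2^+b_1)}$. The condition (\ref{defigset}) defining $\cG(f,g)$ is exactly what makes this form positive semidefinite: the cross terms are dominated by $f(a^+a)g(b^+b)$ via Cauchy--Schwarz on the $2\times2$ scalar matrix. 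Dividing by the null space, completing, and letting $A$ act by left multiplication on the first and second summands diagonally gives a $*$-representation $\pi$ on a domain $\cD(\pi)=\pi(A)(e_1\oplus 0)+\pi(A)(0\oplus e_2)$, where $e_1,e_2$ are the images of $(1,0)$ and $(0,1)$. One checks $\varphi:=[(1,0)]\in S(\pi,f)$, $\psi:=[(0,1)]\in S(\pi,g)$, and $\langle\varphi,\psi\rangle=F(1)$. (A point requiring care: the operators $\pi(a)$ must be closable; this holds because $\pi$ satisfies (\ref{defadjointre1}), so each $\pi(a)$ has densely defined adjoint containing $\pi(a^+)$.) Taking the supremum over $F$ yields $\sup_{F\in\cG(f,g)}|F(1)|^2\le P_A(f,g)$, hence equality (\ref{cxhartranspobality}).

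For the attainment statement (\ref{extemepsi12}): the set $\cG(f,g)$ is convex and, by the constraints (\ref{defigset}), the values $\{F(b^+a):a,b\in A\}$ are bounded in a way that makes $\cG(f,g)$ compact in the topology of pointwise convergence on $A$ (it is a closed subset of a product of closed discs $\prod_{a,b}\{|z|^2\le f(a^+a)g(b^+b)\}$). Hence the continuous functional $F\mapsto|F(1)|^2$ attains its supremum at some $F_0\in\cG(f,g)$; applying the construction of the previous paragraph to $F_0$ produces the desired $\pi$, $\varphi$, $\psi$ with $|\langle\varphi,\psi\rangle|^2=|F_0(1)|^2=P_A(f,g)$.

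The main obstacle I anticipate is not the algebra of the GNS-type construction but verifying that the resulting $\pi$ is genuinely a \emph{$*$-representation} in the sense of the Definition in Section~\ref{unbrep} — in particular that each $\pi(a)$ is closable and that (\ref{defadjointre1}) holds on the constructed domain — together with checking positivity of the block form cleanly when $F$ is allowed to be complex-valued (one should replace $F$ by $e^{i\theta}F$ so that $F(1)\ge 0$ without loss of generality, which is legitimate since $\cG(f,g)$ is invariant under $F\mapsto e^{i\theta}F$). A secondary subtlety is the compactness argument for attainment: one must ensure the product of discs is taken over the index set $A\times A$ with the right (pointwise) topology and that the constraint set is closed there, which is routine but worth stating explicitly.
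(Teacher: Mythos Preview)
The paper does not supply its own proof of this theorem: it is stated and attributed to Alberti (for $C^*$-algebras) and to Uhlmann \cite{uhlmann85} (for general $*$-algebras). Your proposal is therefore a self-contained argument filling in what the paper deliberately omits, and it is essentially the standard one.

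Your construction is correct. A couple of small clean-ups: for the positivity of the block form you should note explicitly that the null space of the semidefinite form on $A\oplus A$ is invariant under left multiplication by $A$---this follows from the adjoint identity you verified together with Cauchy--Schwarz for semidefinite forms---so that $\pi$ descends to the quotient. In the compactness step, $\cG(f,g)$ sits inside $\dC^{A}$ (not $\dC^{A\times A}$): the bound $|F(c)|^2\le f(c^+c)\,g(1)$, obtained from (\ref{defigset}) with $b=1$, already places $\cG(f,g)$ in a product of closed discs indexed by $A$; linearity and the full family of inequalities (\ref{defigset}) are pointwise-closed conditions, so Tychonoff gives compactness and the supremum is attained. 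With these adjustments your argument goes through.
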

  Next we express the transition forms of $\cG(f,g)$ and hence the transition probability in terms of intertwiners of the corresponding GNS representations.  This provides a powerful tool for  computing the transition probability.
Recall $\pi_f$ denotes the GNS representation of $A$ associated with $f\in \cP(A)$ and $\varphi_f$ is the corresponding algebraically cyclic vector. 

\begin{thp}\label{proponetooneft}
Suppose that $f,g\in\cP(A)$. Then there a one-to-one correspondence between the sets\, $\cG(f,g)$\, and\, $I(\pi_f,(\pi_g)^*)$\, given by 
\begin{align}\label{onetoonegfd}
F(b^+a)= \langle T\pi_f(a)\varphi_f,\pi_g(b)\varphi_g\rangle\quad {\rm for} \quad a,b\in A,
\end{align}  where\, $F\in \cG(f,g)$\, and\, $T\in I(\pi_f,(\pi_g)^*)$.\, In particular,\, $F(1)=\langle T\varphi_f,\varphi_g\rangle$.
\end{thp}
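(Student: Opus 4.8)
The plan is to construct the correspondence explicitly in both directions and check compatibility. First, given $T\in I(\pi_f,(\pi_g)^*)$, define a functional $F$ on $A$ by the obvious candidate $F(a):=\langle T\pi_f(a)\varphi_f,\pi_g(a')\varphi_g\rangle$ when $a$ is written as $a=(a')^+a''$; but since the right-hand side of \eqref{onetoonegfd} involves a product decomposition $b^+a$ that is not unique, the cleanest route is to define $F$ directly by $F(c):=\langle T\pi_f(c)\varphi_f,\pi_g(1)\varphi_g\rangle=\langle T\pi_f(c)\varphi_f,\varphi_g\rangle$ for $c\in A$, using that $\varphi_f$ is algebraically cyclic so this is well defined and linear in $c$. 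The intertwining property $T\pi_f(c)\varphi = (\pi_g)^*(c)T\varphi$ for $\varphi\in\cD(\pi_f)$, together with the fact that $T\varphi_g'$-type vectors land in $\cD((\pi_g)^*)$, then lets me rewrite $F(b^+a)$: since $T\pi_f(b^+a)\varphi_f = T\pi_f(b^+)\pi_f(a)\varphi_f = (\pi_g)^*(b^+)\,T\pi_f(a)\varphi_f$, and by \eqref{defadjointre1}--\eqref{defadjointre} applied to $(\pi_g)^*$ (which is a $*$-representation, being the adjoint of a $*$-representation) we get $\langle (\pi_g)^*(b^+)T\pi_f(a)\varphi_f,\varphi_g\rangle = \langle T\pi_f(a)\varphi_f,(\pi_g)^*(b)\varphi_g\rangle$. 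Finally $(\pi_g)^*(b)\varphi_g = \ov{\pi_g}(b)\varphi_g=\pi_g(b)\varphi_g$ since $\varphi_g\in\cD(\pi_g)\subseteq\cD(\ov{\pi_g})\subseteq\cD((\pi_g)^*)$ and the adjoint representation restricts to $\pi_g$ on $\cD(\pi_g)$. This establishes \eqref{onetoonegfd}, and then the estimate $|F(b^+a)|^2 = |\langle T\pi_f(a)\varphi_f,\pi_g(b)\varphi_g\rangle|^2\leq\|T\|^2\|\pi_f(a)\varphi_f\|^2\|\pi_g(b)\varphi_g\|^2$; but we need the constant to be $1$, so the key point is that $\|T\|\leq 1$ must be forced — here I should recall that membership in $I(\pi_f,(\pi_g)^*)$ already only requires $T$ bounded, so I must either build the bound into the correspondence or note that the relevant cone of transition forms matches intertwiners of norm $\le 1$; inspecting \eqref{defigset}, the correct statement pairs $\cG(f,g)$ with $\{T\in I(\pi_f,(\pi_g)^*):\|T\|\le 1\}$, and I will phrase it that way (or simply verify both inclusions give the Cauchy--Schwarz bound with the operator norm of $T$, which on the contractions gives exactly \eqref{defigset}).

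For the reverse direction, start with $F\in\cG(f,g)$. The sesquilinear form $(\pi_f(a)\varphi_f,\pi_g(b)\varphi_g)\mapsto F(b^+a)$ is well defined on $\cD(\pi_f)\times\cD(\pi_g)$: if $\pi_f(a)\varphi_f=\pi_f(a_1)\varphi_f$ then $f((a-a_1)^+(a-a_1))=\|\pi_f(a-a_1)\varphi_f\|^2=0$, and \eqref{defigset} gives $|F(b^+(a-a_1))|^2\le f((a-a_1)^+(a-a_1))g(b^+b)=0$, so $F(b^+a)=F(b^+a_1)$; similarly in the second argument. By \eqref{defigset} this form is bounded by $\|\pi_f(a)\varphi_f\|\,\|\pi_g(b)\varphi_g\|$, so it extends to a bounded sesquilinear form on $\Hh(\pi_f)\times\Hh(\pi_g)$ of norm $\le 1$, hence is represented by a unique bounded operator $T:\Hh(\pi_f)\to\Hh(\pi_g)$ with $\|T\|\le 1$ and $\langle T\pi_f(a)\varphi_f,\pi_g(b)\varphi_g\rangle = F(b^+a)$. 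It remains to show $T\in I(\pi_f,(\pi_g)^*)$, i.e. $T\varphi\in\cD((\pi_g)^*)$ and $T\pi_f(c)\varphi = (\pi_g)^*(c)T\varphi$ for $\varphi\in\cD(\pi_f)$, $c\in A$. Writing $\varphi=\pi_f(a)\varphi_f$, compute for all $b\in A$: $\langle T\pi_f(ca)\varphi_f,\pi_g(b)\varphi_g\rangle = F(b^+ca) = F((c^+b)^+a) = \langle T\pi_f(a)\varphi_f,\pi_g(c^+b)\varphi_g\rangle = \langle T\varphi,\pi_g(c^+)\pi_g(b)\varphi_g\rangle = \langle \pi_g(c)^*T\varphi,\pi_g(b)\varphi_g\rangle$ whenever $T\varphi$ is in the domain — more precisely this string of equalities shows $\langle T\varphi,\pi_g(c^+)\eta\rangle = \langle T\pi_f(c)\varphi,\eta\rangle$ for all $\eta$ in the dense domain $\cD(\pi_g)=\pi_g(A)\varphi_g$, which is exactly the statement $T\varphi\in\cD(\pi_g(c^+)^*)$ with $\pi_g(c^+)^*T\varphi = T\pi_f(c)\varphi$; since this holds for every $c\in A$, $T\varphi\in\cD((\pi_g)^*)$ and $(\pi_g)^*(c)T\varphi = T\pi_f(c)\varphi$, which is \eqref{definterwiner} for the pair $(\pi_f,(\pi_g)^*)$.

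Finally, I check the two constructions are mutually inverse: starting from $T$, forming $F$, and reconstructing the operator from the bounded form recovers $T$ by uniqueness of the operator representing a sesquilinear form and density of $\pi_f(A)\varphi_f$, $\pi_g(A)\varphi_g$; conversely starting from $F$, the reconstructed $F'$ satisfies $F'(b^+a)=\langle T\pi_f(a)\varphi_f,\pi_g(b)\varphi_g\rangle = F(b^+a)$ for all $a,b$, and taking $a=c$, $b=1$ gives $F'(c)=F(c)$ for all $c\in A$, so $F'=F$. The statement $F(1)=\langle T\varphi_f,\varphi_g\rangle$ is the special case $a=b=1$ of \eqref{onetoonegfd}.

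I expect the main obstacle to be bookkeeping around the domains of the \emph{unbounded} adjoint representation $(\pi_g)^*$: I must be careful that $\varphi_g\in\cD((\pi_g)^*)$ so that $(\pi_g)^*(b)\varphi_g$ makes sense and equals $\pi_g(b)\varphi_g$, and that the intertwining identity $T\pi_f(c)\varphi=(\pi_g)^*(c)T\varphi$ is first obtained in the weak form "$\langle T\varphi,\pi_g(c^+)\eta\rangle=\langle T\pi_f(c)\varphi,\eta\rangle$ for $\eta\in\cD(\pi_g)$" before upgrading it to a domain membership statement via \eqref{defadjointre}. The norm bookkeeping — ensuring the correspondence is with contractions and that \eqref{defigset} corresponds precisely to $\|T\|\le 1$ — is the other point requiring care, but it follows from the sharp Cauchy--Schwarz/boundedness argument above. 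All remaining verifications are routine linearity and well-definedness checks.
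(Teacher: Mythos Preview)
Your proof is correct and follows essentially the same route as the paper: define $F(c)=\langle T\pi_f(c)\varphi_f,\varphi_g\rangle$ from $T$, and conversely use the bound \eqref{defigset} to represent the sesquilinear form $(\pi_f(a)\varphi_f,\pi_g(b)\varphi_g)\mapsto F(b^+a)$ by a bounded operator, then verify the intertwining property via the identity $F((c^+b)^+a)=F(b^+(ca))$. Your observation that the correspondence is really with the unit ball $\{T\in I(\pi_f,(\pi_g)^*):\|T\|\le 1\}$ is well taken---the paper's proof tacitly imposes this constraint in the converse direction as well---and your explicit well-definedness check for the sesquilinear form and the mutual-inverse verification are details the paper leaves implicit.
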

\begin{proof}
Let $F\in \cG(f,g)$. Then 
$$
|F(b^+a)|^2\leq f(a^*a)g(b^*b)=\|\pi_f(a)\varphi_f\|^2\|\pi_g(b)\varphi_g\|^2 \quad {\rm for}\quad a,b\in A.
$$
Hence there exists a bounded linear operator $T$ of $\Hh(\ov{\pi}_g)$  into $\Hh(\ov{\pi}_f)$ such that\, $\|T\|\leq 1$\, and 
(\ref{onetoonegfd}) holds. Let $a,b,c\in A$. Using\, (\ref{onetoonegfd})\, we obtain
\begin{align*}
 \langle T \pi_f(a)\varphi_f,\pi_g(c^+)\pi_g(b)\varphi_g\rangle&=F((c^+b)^+a)=F(b^+(ca))\\&=\langle T\pi_f(c)\pi_f(a)\varphi_f,\pi_g(b)\varphi_g \rangle.
\end{align*}
Hence\, $T\pi_f(b)\varphi_f\in \cD(\pi_g(c)^*)$ and\, $\pi_g(c)^*T\pi_f(a)\varphi_f= T\pi_f(c^+)\pi_f(a)\varphi_f$.\, Because $c\in A$ was arbitrary,\, $T\pi_f(a)\varphi_f \in \cD((\pi_g)^*)$. Then $$(\pi_g)^*(c^+)T\pi_f(a)\varphi_f= T\pi_f(c^+)\pi_f(a)\varphi_f\quad {\rm for}\quad a\in A,$$ which means that $T\in I(\pi_f,(\pi_g)^*)$.

Conversely, let $T\in I(\pi_f,(\pi_g)^*)$ and $\|T\|\leq 1$. Define  $F(a)= \langle T\pi_f(a)\varphi_f,\varphi_g\rangle$\, for\, $a\in A$. It is straightforward to check that\,  (\ref{onetoonegfd})\, holds and hence\, (\ref{defigset}), that is, $F\in \cG(f,g)$. 

Clearly,  by\, (\ref{onetoonegfd}), $F=0$ is equivalent to $T=0$. Thus we have a one-to-one correspondence between functionals $F$ and operators $T$.
\end{proof}

Combining Theorem \ref{chartransprob} and Proposition \ref{proponetooneft} and using the formula $F(1)=\langle T\varphi_f,\varphi_g\rangle$ we obtain
\begin{thc}
For any $f, g \in \cP(A)$ we have  
\begin{align}\label{intertwtnull}
P_A(f,g)=\sup_{T\in I(\pi_f,(\pi_g)^*),\, \|T\|\leq 1}~ |\langle T \varphi_f,\varphi_g\rangle|^2.
\end{align}
\end{thc}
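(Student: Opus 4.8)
The plan is to read the identity (\ref{intertwtnull}) directly off the two preceding results. First I would invoke Theorem \ref{chartransprob} to rewrite the transition probability as
\[
P_A(f,g)=\sup_{F\in \cG(f,g)}\,|F(1)|^2 ,
\]
so that the only remaining task is to transport this supremum along the correspondence of Proposition \ref{proponetooneft}.

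Next I would observe that the bijection constructed in the proof of Proposition \ref{proponetooneft} is not merely onto $I(\pi_f,(\pi_g)^*)$ but onto its closed unit ball $\{T\in I(\pi_f,(\pi_g)^*):\|T\|\leq 1\}$. Indeed, the forward direction produces from $F\in\cG(f,g)$, via the estimate $|F(b^+a)|^2\leq \|\pi_f(a)\varphi_f\|^2\|\pi_g(b)\varphi_g\|^2$, an intertwiner $T$ with $\|T\|\leq 1$; and the converse direction takes exactly such norm-bounded intertwiners as its input. Moreover the proposition records the identity $F(1)=\langle T\varphi_f,\varphi_g\rangle$ under this bijection. Hence
\[
\sup_{F\in \cG(f,g)}\,|F(1)|^2
=\sup_{T\in I(\pi_f,(\pi_g)^*),\,\|T\|\leq 1}\,|\langle T\varphi_f,\varphi_g\rangle|^2 ,
\]
which combined with the first step yields (\ref{intertwtnull}).

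There is essentially no obstacle here: the corollary is a bookkeeping consequence of Theorem \ref{chartransprob} and Proposition \ref{proponetooneft}. The one point deserving a sentence of care is that the supremum on the right-hand side genuinely carries the constraint $\|T\|\leq 1$. Since $f$ and $g$ need not be states, the cyclic vectors $\varphi_f$ and $\varphi_g$ need not be unit vectors, so one cannot remove the norm bound by rescaling; but this is harmless precisely because the range of the bijection is exactly the unit ball of $I(\pi_f,(\pi_g)^*)$, so no intertwiner of norm larger than $1$ is lost.
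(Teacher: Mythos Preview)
Your proposal is correct and follows exactly the paper's own argument: the corollary is stated there as an immediate consequence of Theorem~\ref{chartransprob} together with Proposition~\ref{proponetooneft} and the identity $F(1)=\langle T\varphi_f,\varphi_g\rangle$. Your additional remark that the bijection in Proposition~\ref{proponetooneft} really lands in the unit ball of $I(\pi_f,(\pi_g)^*)$ is a welcome clarification, since the proposition's statement omits the norm bound that its proof actually establishes.
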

If the GNS representations of $f$ and $g$ are essentially self-adjoint, a number of stronger results can be obtained.    
\begin{tht}\label{sumofgnspifg}
Suppose that $f$ and $g$ are positive linear functionals on $A$ such that their GNS representations $\pi_f$ and $\pi_g$ are essentially self-adjoint. Let $\pi$ be a biclosed $\ast$-representation of $A$ such that the sets $S(\pi,f)$ and $S(\pi,g)$ are not empty. Fix vectors 
$\varphi \in S(\pi,f)$ and $\psi \in S(\pi,g)$.
Then
\begin{align}\label{proptrastcss}
P(f,g)=\sup_{T\in \pi(A)^\prime_{ss}, \|T\|\leq 1} ~|\langle T \varphi,\psi\rangle|^2.
\end{align}
\end{tht}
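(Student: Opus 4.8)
The plan is to reduce the general statement to the case of the GNS representation via the intertwiner formula in the Corollary after Proposition~\ref{proponetooneft}, and then exploit that essential self-adjointness of $\pi_f$ and $\pi_g$ forces the intertwining space $I(\pi_f,(\pi_g)^*)$ to behave like a genuine commutant. Concretely, since $\varphi \in S(\pi,f)$, the cyclic subrepresentation $\pi_0$ of $\pi$ generated by $\varphi$ (acting on the graph-closure of $\pi(A)\varphi$ inside $\Hh(\pi)$) is, by the uniqueness part of Proposition~\ref{gnsprop}, unitarily equivalent to $\ov{\pi}_f$; similarly the subrepresentation generated by $\psi$ is equivalent to $\ov{\pi}_g$. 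The hypothesis that $\pi_f$, hence $\pi_g$, is essentially self-adjoint means $(\pi_f)^* = (\pi_f)^{**}$ and likewise for $g$; in particular $(\pi_g)^*$ is self-adjoint.

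First I would treat the ``$\geq$'' inequality, which is the easy direction: given $T \in \pi(A)^\prime_{ss}$ with $\|T\|\leq 1$, one checks directly that $F(a) := \langle \pi(a)T\varphi,\psi\rangle = \langle T\pi(a)\varphi,\psi\rangle$ is well-defined and lies in $\cG(f,g)$ (the estimate is the same Cauchy--Schwarz computation as in the paragraph establishing $F_{\varphi,\psi}\in\cG(f,g)$, using that $T$ commutes with the $\ov{\pi(a)}$), and $F(1) = \langle T\varphi,\psi\rangle$; then Theorem~\ref{chartransprob} gives $P(f,g)\geq |F(1)|^2$. Taking the supremum over such $T$ yields one direction.

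For ``$\leq$'': by the Corollary, $P(f,g) = \sup\{|\langle R\varphi_f,\varphi_g\rangle|^2 : R \in I(\pi_f,(\pi_g)^*),\ \|R\|\leq 1\}$. Fix such an $R$. Since $\pi$ is biclosed and contains (up to the unitaries $U_f,U_g$ from Proposition~\ref{gnsprop}) the self-adjoint representations $\ov{\pi}_f$ and $\ov{\pi}_g$ as subrepresentations — here I use essential self-adjointness to identify $\ov{\pi}_f$ with $(\pi_f)^* = (\pi_f)^{**}$, so it is self-adjoint — Proposition~\ref{technicalself} lets me split off each of these as an orthogonal direct summand of $\pi$. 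Transporting $R$ through $U_f, U_g$ gives a bounded operator between the corresponding subspaces of $\Hh(\pi)$; the intertwining relation plus the direct-sum decompositions let me extend it by zero on the complementary summands to an operator $T$ on all of $\Hh(\pi)$, and the relation $R \in I(\pi_f,(\pi_g)^*)$ together with $(\pi_g)^*$ being self-adjoint (so that $\cD((\pi_g)^*)=\cD(\ov{\pi}_g)$, controlled by the graph topology) is exactly what is needed to verify both $T\ov{\pi(a)}\subseteq\ov{\pi(a)}T$ and the adjoint relation $T^*\ov{\pi(a)}\subseteq\ov{\pi(a)}T^*$, i.e.\ $T\in\pi(A)^\prime_{ss}$, with $\|T\|\leq 1$ and $\langle T\varphi,\psi\rangle = \langle R\varphi_f,\varphi_g\rangle$. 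Taking suprema finishes the argument.

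The main obstacle is the bookkeeping in this last step: showing that the operator obtained by conjugating $R$ and extending by zero actually lands in the \emph{symmetrized} commutant $\pi(A)^\prime_{ss}$ — in particular that $T^*$ also intertwines. This is where essential self-adjointness is genuinely used twice over: once to make $\ov{\pi}_f$ and $\ov{\pi}_g$ self-adjoint so that Proposition~\ref{technicalself} applies and gives clean orthogonal complements, and once to identify $\cD((\pi_g)^*)$ with $\cD(\ov{\pi}_g)$ so that $R$'s intertwining property with $(\pi_g)^*$ is the \emph{same} as intertwining with the closed representation $\ov{\pi}_g$, whose graph topology is complete (so adjoints of intertwiners are again intertwiners, cf.\ (\ref{strongcom})). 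Care is also needed because $S(\pi,f)$ may contain vectors that are not cyclic for all of $\pi$; the reduction to the cyclic subrepresentation, via the direct-sum structure, must be done before invoking the GNS uniqueness.
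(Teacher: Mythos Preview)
Your overall architecture mirrors the paper's proof closely: the easy direction via $F\in\cG(f,g)$, and the hard direction by splitting off the cyclic subrepresentations as direct summands using Proposition~\ref{technicalself}, then lifting an intertwiner $R\in I(\pi_f,(\pi_g)^*)$ to an element of $\pi(A)'_{ss}$ by extending by zero. However, there is a genuine gap in your handling of the closures.

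You write that essential self-adjointness lets you ``identify $\ov{\pi}_f$ with $(\pi_f)^* = (\pi_f)^{**}$, so it is self-adjoint''. This is false in general: the definition used here is $(\pi_f)^*=(\pi_f)^{**}$, which says the \emph{biclosure} is self-adjoint, not the closure. One always has $\ov{\pi}_f\subseteq(\pi_f)^{**}$, but the inclusion can be strict; Example~\ref{hambmp} in the paper exhibits determinate moment measures for which $\pi_\mu$ is essentially self-adjoint yet $\ov{\pi}_\mu$ is not self-adjoint. Consequently your graph-closure subrepresentation $\pi_0\cong\ov{\pi}_f$ need not be self-adjoint, Proposition~\ref{technicalself} does not apply to it, and the identification $\cD((\pi_g)^*)=\cD(\ov{\pi}_g)$ you invoke later is equally unjustified.

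The fix, and the point you are missing, is to work with the biclosures $(\rho_f)^{**}$ and $(\rho_g)^{**}$ of the cyclic subrepresentations $\rho_f=\pi\lceil\pi(A)\varphi$ and $\rho_g=\pi\lceil\pi(A)\psi$. This is where the biclosedness of $\pi$ actually enters: Proposition~\ref{adrepbasic2}(iii) gives $(\rho_f)^{**}\subseteq\pi^{**}=\pi$, so the (now genuinely self-adjoint) biclosure sits inside $\pi$ and Proposition~\ref{technicalself} yields $\pi=(\rho_f)^{**}\oplus\rho_1$. With this correction the rest of your outline goes through; the passage from $T_0\in I(\rho_f,(\rho_g)^*)$ to $T_0\in I((\rho_f)^{**},(\rho_g)^{**})$ and $T_0^*\in I((\rho_g)^{**},(\rho_f)^{**})$ is not a consequence of (\ref{strongcom}) but of the general fact (Proposition~8.2.3 in \cite{schmu90}) that intertwiners extend to biclosures and their adjoints intertwine the adjoint representations. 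The Remark following the theorem in the paper confirms that your version---using closures and a merely closed $\pi$---would be valid only under the stronger hypothesis that $\ov{\pi}_f$ and $\ov{\pi}_g$ are themselves self-adjoint.
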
 
\begin{proof} Let $T\in \pi(A)^\prime_{ss}$ and $ \|T\|\leq 1$. Similarly, as in the proof of Proposition  \ref{proponetooneft}, we define   
$F(a)= \langle T\pi(a)\varphi,\psi\rangle$, $a\in A$. Since $\pi(A)^\prime_{ss} \subseteq\pi(A)^\prime_{s}$, we obtain 
\begin{align*}
|F(b^+a)|^2&=|\langle T\pi(b^+a)\varphi,\psi\rangle|^2=|\langle \pi(b^+)T\pi(a)\varphi,\psi\rangle|^2\\&=|\langle T\pi(a)\varphi,\pi(b)\psi\rangle|^2\leq \|\pi (a)\varphi\|^2~\|\pi(b)\psi\|^2=f(a^+a)g(b^+b)
\end{align*}

for $a,b\in A$, that is, $F\in \cG(f,g)$. Clearly, we have $\langle T\varphi,\psi\rangle=F(1)$. 
Let $\rho_f$ and $\rho_g$ denote  the restrictions $\pi \lceil \pi(A)\varphi$ and $\pi \lceil \pi(A)\psi,$ respectively.  Since $\rho_f\subseteq \pi$ and $\rho_g\subseteq \pi$ and  $\pi$ is  biclosed, it follows from Proposition \ref{adrepbasic2}(iii) that $(\rho_f)^{**}\subseteq \pi^{**}=\pi$ and $(\rho_g)^{**}\subseteq \pi^{**}=\pi$. Since $\varphi \in S(\pi,f)$ and $\psi \in S(\pi,g)$, the representations $\rho_f$ and $\rho_g$ are unitarily equivalent to the GNS representations $\pi_f$ and $\pi_g$, respectively. For notational simplicity we identify $\rho_f$ with $\pi_f$ and $\rho_g$ with  $\pi_g$. Since $\rho_f$ and $\rho_g$ are essentially self-adjoint by assumption,  $(\rho_f)^{**}$ and $(\rho_g)^{**}$ are self-adjoint. Therefore, by Proposition \ref{technicalself}, there are subrepresentations $\rho_1$ and $\rho_2$ of $\pi$ such that $\pi=(\rho_f)^{**}\oplus \rho_1$ and $\pi=(\rho_g)^{**}\oplus \rho_2$.  

Conversely, suppose  that  $F\in \cG(f,g)$. By Proposition \ref{proponetooneft}, there is an intertwiner  $T_0\in I(\rho_f,(\rho_g)^*)\cong I(\pi_f,(\pi_g)^*))$\, such that\, $\|T_0\|\leq 1$\, and (\ref{onetoonegfd})\, holds with $T$ replaced by $T_0$. Define $T:\Hh(\rho_f) \oplus \Hh(\rho_1)\to \Hh(\rho_g) \oplus \Hh(\rho_2)$ by $T(\xi_f,\xi_1)=(T_0\xi_f,0)$. Clearly, $T^*$ acts by $T^*(\eta_g,\eta_2)=(T_0^*\eta_g,0)$. Since $(\rho_g)^{**}=(\rho_g)^*$ and $(\rho_f)^{**}=(\rho_f)^*$ by assumption and\, $T_0\in I(\rho_f,(\rho_g)^*)$, it follows from Proposition 8.2.3,(iii) and (iv), in \cite{schmu90} that 
\begin{align*} 
T_0&\in I((\rho_f)^{**},(\rho_g)^*)=I((\rho_f)^{**},(\rho_g)^{**}),\\ T_0^* &\in I((\rho_g)^{**},(\rho_f)^*)=I((\rho_g)^{**},(\rho_f)^{**}).
\end{align*}  
From these relations we easily derive  that the operators $T$ and $T^*$ are in $\pi(A)^\prime_{s}$,  so that  $T\in \pi(A)^\prime_{ss}$ by (\ref{strongcom}). Then we have $\|T\|=\|T_0\|\leq 1$ and 
$F(1)=\langle T_0 \varphi,\psi\rangle=\langle T \varphi,\psi \rangle$. Togther with the first  paragraph of this proof we have shown that the supremum over the operators $T\in \pi(A)^\prime_{ss}$, $ \|T\|\leq 1$, is equal to the supremum over the functionals $F\in \cG(f,g)$. Since the latter is equal to $P_A(f,g)$ by Theorem \ref{chartransprob}, this proves (\ref{proptrastcss}).
\end{proof}

Remark. A slight modification of the preceding proof shows the following: If we assume that the closures $\ov{\pi}_f$ and $\ov{\pi}_g$ of the GNS representations $\pi_f$ and $\pi_g$ are self-adjoint, then the assertion of Theorem \ref{sumofgnspifg}  
remains valid if it is only assumed that $\pi$ is {\it closed} rather than biclosed. A similar remark applies  also for the  subsequent applications of Theorem \ref{sumofgnspifg}  given below.
\smallskip

Theorem \ref{sumofgnspifg} says that (in the case of essentially self-adjoint GNS representations $\pi_f$ and $\pi_g$)  the transition probability $P(f,g)$ is given by  formula (\ref{proptrastcss}) in {\it any} fixed biclosed $*$-representation $\pi$ for which the sets $S(\pi,f)$ and $S(\pi,g)$ are not empty and for {\it arbitrary} fixed vectors $\varphi \in S(\pi,f)$ and $\psi \in S(\pi,g)$. In particular, we may take $\pi:=(\pi_f)^{**}\oplus (\pi_g)^{**}$,\, $\varphi:=\varphi_f$,\, and $\psi:=\varphi_g.$
  \begin{tht}\label{sumofgnspifg1}
Suppose that $f, g\in \cP(A)$ and the GNS representations $\pi_f$ and $\pi_g$ are essentially self-adjoint. Suppose that  $\pi$ is a biclosed $\ast$-representation of $A$ and there exist vectors
 $\varphi \in S(\pi,f)$ and $\psi \in S(\pi,g)$. Let $F_\varphi$ and $F_\psi$ denote the vector functionals on the von Neumann algebra\, $\cM:=(\pi(A)^\prime_{ss})^\prime$ given by $F_\varphi (x)=\langle x\varphi,\varphi\rangle $ and $F_\psi (x)=\langle x\psi,\psi\rangle $,\, $x\in \cM$.
 Then we have
 \begin{align}\label{proptrastcss1}
 P_A(f,g)=P_{\cM}\, (F_\varphi,F_\psi) .
 \end{align}
  Further, there exist vectors $\varphi^\prime\in S(\pi,f)$ and  $\psi^\prime\in S(\pi,g)$ such that 
  $\langle x \varphi^\prime,\varphi^\prime\rangle =\langle x \varphi,\varphi\rangle$ and $\langle x \psi^\prime,\psi^\prime\rangle =\langle x \psi,\psi\rangle$ for $x\in \cM$ and
 \begin{align}\label{attaiendtp}
 P_A(f,g)=|\langle\varphi^\prime,\psi^\prime\rangle|^2.
 \end{align}
 \end{tht}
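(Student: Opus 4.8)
The plan is to derive both claims from Theorem~\ref{sumofgnspifg} together with the classical description of the transition probability of two vector functionals on a von Neumann algebra. Since $\pi$ is biclosed it is in particular closed, so $\pi(A)^\prime_{ss}$ is a von Neumann algebra and the bicommutant theorem gives $\cM^\prime=(\pi(A)^\prime_{ss})^{\prime\prime}=\pi(A)^\prime_{ss}$. Hence Theorem~\ref{sumofgnspifg} reads
\[
P_A(f,g)=\sup\bigl\{\,|\langle T\varphi,\psi\rangle|^2:\ T\in\cM^\prime,\ \|T\|\le1\,\bigr\},
\]
and (\ref{proptrastcss1}) will follow once this supremum is identified with $P_{\cM}(F_\varphi,F_\psi)$.

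For that identification I would prove $P_{\cM}(F_\varphi,F_\psi)=\sup\{\,|\langle T\varphi,\psi\rangle|^2:\ T\in\cM^\prime,\ \|T\|\le1\,\}$ by a double inequality, applying Theorem~\ref{chartransprob} and Proposition~\ref{proponetooneft} to the unital $*$-algebra $\cM$. For ``$\ge$'': a contraction $T\in\cM^\prime$ yields $F(x):=\langle Tx\varphi,\psi\rangle$ on $\cM$; since $Tx=xT$, the Cauchy--Schwarz estimate used for $F_{\varphi,\psi}$ before Theorem~\ref{chartransprob} gives $F\in\cG(F_\varphi,F_\psi)$ with $F(1)=\langle T\varphi,\psi\rangle$, whence $|\langle T\varphi,\psi\rangle|^2\le P_{\cM}(F_\varphi,F_\psi)$. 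For ``$\le$'': given $F\in\cG(F_\varphi,F_\psi)$, Proposition~\ref{proponetooneft} supplies $T_0\in I(\pi_{F_\varphi},(\pi_{F_\psi})^*)$, $\|T_0\|\le1$, with $F(1)=\langle T_0\varphi_{F_\varphi},\varphi_{F_\psi}\rangle$; as $\cM$ is a $C^*$-algebra, $\pi_{F_\varphi}$ is unitarily equivalent to $\cM$ acting on $\ov{\cM\varphi}$ with cyclic vector $\varphi$ and $(\pi_{F_\psi})^*$ to $\cM$ acting on $\ov{\cM\psi}$, while $P_{\ov{\cM\varphi}},P_{\ov{\cM\psi}}\in\cM^\prime$, so $T:=P_{\ov{\cM\psi}}T_0P_{\ov{\cM\varphi}}$ (extended by $0$) lies in $\cM^\prime$, is a contraction and satisfies $\langle T\varphi,\psi\rangle=F(1)$. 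Taking suprema and invoking Theorem~\ref{chartransprob} once more yields the displayed equality, hence (\ref{proptrastcss1}).

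For the attainment, note first that the unit ball of $\cM^\prime$ is compact in the weak operator topology, in which $T\mapsto\langle T\varphi,\psi\rangle$ is continuous, so the supremum is attained at some contraction $T\in\cM^\prime$; after multiplication by a unimodular scalar, $\langle T\varphi,\psi\rangle=c:=\sqrt{P_A(f,g)}\ge0$. A short convexity argument --- comparing $T$ with the contractions $W\bigl((1-t)|T|+t\,W^*W\bigr)\in\cM^\prime$ for $t\in[-1,1]$, where $T=W|T|$ is the polar decomposition inside $\cM^\prime$, and then replacing $T$ by $P_{\ov{\cM\psi}}TP_{\ov{\cM\varphi}}$ and iterating --- shows that $c$ is already attained at a partial isometry $W\in\cM^\prime$ with $W=P_{\ov{\cM\psi}}WP_{\ov{\cM\varphi}}$. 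Next I would record the facts that turn dressings by $\cM^\prime$ into amplitudes in $\pi$: since $\pi$ is closed, $\cM^\prime=\pi(A)^\prime_{ss}\subseteq\pi(A)^\prime_s$ by (\ref{strongcom}), so elements of $\cM^\prime$ preserve $\cD(\pi)$ and commute with $\pi(A)$ there; for a unitary $u\in\cM^\prime$ one has $u\,\ov{\pi(a)}\,u^*\subseteq\ov{\pi(a)}$ and $u^*\ov{\pi(a)}\,u\subseteq\ov{\pi(a)}$, hence $u\,\ov{\pi(a)}\,u^*=\ov{\pi(a)}$, i.e.\ each $\ov{\pi(a)}$ is affiliated with $\cM$; thus $\ov{\pi(A)\varphi}\subseteq\ov{\cM\varphi}$, and since essential self-adjointness makes $\pi\lceil\pi(A)\varphi\cong\pi_f$ a subrepresentation with self-adjoint biclosure, Proposition~\ref{adrepbasic2}(iii) and Proposition~\ref{technicalself} give $\ov{\pi(A)\varphi}=\ov{\cM\varphi}$ and $P_{\ov{\cM\varphi}}\in\cM^\prime$, and likewise for $\psi$. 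Consequently, for any partial isometry $u_1\in\cM^\prime$ with $u_1^*u_1\varphi=\varphi$ the vector $u_1\varphi$ lies in $S(\pi,f)$ and satisfies $\langle xu_1\varphi,u_1\varphi\rangle=\langle x\varphi,\varphi\rangle$ for $x\in\cM$; similarly on the $\psi$-side.

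It then remains to extend $W$ to a partial isometry $u_1\in\cM^\prime$ with $u_1^*u_1=P_{\ov{\cM\varphi}}$ whose additional part $u_1-W$ has range orthogonal to $WW^*$ and to $\ov{\cM^\prime\psi}$; then $\varphi^\prime:=u_1\varphi$, $\psi^\prime:=\psi$ give $\langle\varphi^\prime,\psi^\prime\rangle=\langle W\varphi,\psi\rangle=c$, hence (\ref{attaiendtp}), and by the previous paragraph lie in $S(\pi,f)$, $S(\pi,g)$ with the required $\cM$-functionals. The existence of this extension is, I expect, the main obstacle: it amounts to the assertion that $P_{\ov{\cM\varphi}}-W^*W$ is equivalent in $\cM^\prime$ to a subprojection of the largest projection of $\cM^\prime$ below $(I-WW^*)\wedge(I-P_{\ov{\cM^\prime\psi}})$, which calls for comparison theory in the von Neumann algebra $\cM^\prime$ and for exploiting the maximality of $W$ (maximality should force the part of $\varphi$ that $W$ leaves unused to be orthogonal to $\ov{\cM^\prime\psi}$, so that no room is in fact needed for it on the $\psi$-side). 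Equivalently, one may invoke the classical attainment theorem for the transition probability of two normal states on a von Neumann algebra for the data $(\cM;F_\varphi,F_\psi)$ and transfer the attaining vectors into $\Hh(\pi)$ along a partial isometry in $\cM^\prime=\pi(A)^\prime_{ss}$ using the facts above; that transfer rests on the same comparison-theoretic point, which is where the real work lies.
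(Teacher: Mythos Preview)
Your argument for (\ref{proptrastcss1}) is correct and is essentially the paper's argument unpacked: the paper simply observes that, since $\cM$ is a von Neumann algebra, its GNS representations are automatically self-adjoint, so Theorem~\ref{sumofgnspifg} applies verbatim to the identity representation of $\cM$ with the \emph{same} vectors $\varphi,\psi$; this yields $P_{\cM}(F_\varphi,F_\psi)=\sup\{\,|\langle T\varphi,\psi\rangle|^2:T\in\cM',\ \|T\|\le1\}$ in one line.

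For (\ref{attaiendtp}) your primary route---dressing only $\varphi$ and keeping $\psi'=\psi$---runs into a real obstruction, as you yourself note: the extension of $W$ to a partial isometry with initial space $P_{\ov{\cM\varphi}}$ and range orthogonal to $\ov{\cM'\psi}$ need not exist (think of a finite $\cM'$). The paper sidesteps this entirely by dressing \emph{both} vectors. It takes the polar decomposition $h=R_u|h|$ of the normal functional $h(\cdot)=\langle\cdot\,\varphi,\psi\rangle$ on $\cM'$, so that $\|h\|=\langle u^*\varphi,\psi\rangle$ realizes the supremum, and then invokes \cite[Appendix~7]{alberti92}: there exist partial isometries $v,w\in\cM'$ with
\[
\langle u^*\varphi,\psi\rangle=\langle v^*w\varphi,\psi\rangle,\qquad w^*w\ge p(\varphi),\qquad v^*v\ge p(\psi),
\]
where $p(\varphi),p(\psi)\in\cM'$ are the projections onto $\ov{\cM\varphi},\ov{\cM\psi}$. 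One then sets $\varphi':=w\varphi$, $\psi':=v\psi$. Because $\pi$ is closed, $w,w^*,v,v^*\in\pi(A)'_{ss}\subseteq\pi(A)'_s$, so $\varphi',\psi'\in\cD(\pi)$; the inequalities $w^*w\ge p(\varphi)$, $v^*v\ge p(\psi)$ give $w^*w\varphi=\varphi$, $v^*v\psi=\psi$, whence $\varphi'\in S(\pi,f)$, $\psi'\in S(\pi,g)$ and the $\cM$-functionals are preserved. No further comparison theory is needed for the ``transfer''---your suspicion that it hides the same obstacle is mistaken. The comparison-theoretic work is absorbed into the existence of $v,w$ in \cite{alberti92}, and it is precisely the freedom to move \emph{both} vectors that makes it go through; your one-sided formulation is strictly harder and in general false.
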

\begin{proof} Since $\pi(A)^\prime_{ss}$ is a  von Neumann algebra, we have $T\in \pi(A)^\prime_{ss}$ if and only if\, $T\in (\pi(A)^\prime_{ss})^{\prime\prime}=\cM^\prime $ . Therefore, applying formula (\ref{proptrastcss}) to the $*$-representation $\pi$ of $A$ and to the identity representation of the von Neumann algebra $\cM$, it follows that the supremum of\,  $|\langle T \varphi,\psi\rangle|^2$\, over all operators\, $T\in \pi(A)^\prime_{ss}=\cM^\prime$,\, $ \|T\|\leq 1$, \, is equal to $P_A(f,g)$ and also to $P_{\cM}\, (F_\varphi,F_\psi)$. This yields the equality (\ref{proptrastcss1}).  

Now we prove the existence of vectors $\varphi^\prime$ and  $\psi^\prime$ having the desired properties. In order to do so we go into the details of the proof of \cite[Appendix 7]{alberti92}. Besides we  use some  facts from von Neumann algebra theory \cite{kr}. We define a normal linear functional on the von Neumann algebra $\cM^\prime$  by $h(\cdot)=\langle \cdot \, \varphi,\psi\rangle$. Let $h=R_u|h|$ be the polar decomposition of $h$, where $u$  is a partial isometry from $\cM^\prime$. Then we have $|h|=R_{u^*}h$ and hence $\|h\|=\|\, |h|\, \|= |h|(1)=h(u^*)=\langle u^*\varphi,\psi\rangle$. Therefore, we obtain
\begin{align}\label{hnorm}
P_{\cM}\, (F_\varphi,F_\psi)=\sup_{T\in \cM^\prime, \|T\|\leq 1} ~|\langle T \varphi,\psi\rangle|^2= \|h\|^2=\langle u^*\varphi,\psi\rangle^2,
\end{align}
where the first equality follows formula (\ref{proptrastcss}) applied  to the von Neumann algebra $\cM$. In the  proof of \cite[Appendix 7]{alberti92} it was shown that there exist partial isometries $v,w\in \cM^\prime $ satisfying 
\begin{align}\label{uvw} &\langle u^*\varphi,\psi\rangle=\langle v^*w\varphi,\psi\rangle,\\ &   w^*w\geq p(\varphi), \, v^*v\geq p(\psi),\label{vwvarphi}
\end{align} 
where $p(\varphi)$ and $p(\psi)$ are the projections of $\cM^\prime$ onto the closures of $\cM \varphi$ and $\cM \psi$, respectively. 
Set   $\varphi^\prime:=w \varphi$ and   $\psi^\prime:=v \psi$. Comparing (\ref{uvw}) with (\ref{hnorm}) and (\ref{proptrastcss1}) we obtain (\ref{attaiendtp}).

From (\ref{vwvarphi}) it follows that $\langle x \varphi^\prime,\varphi^\prime\rangle =\langle x \varphi,\varphi\rangle$ and $\langle x \psi^\prime,\psi^\prime\rangle =\langle x \psi,\psi\rangle$ for $x\in \cM$ and that $w^*w\varphi=\varphi$ and $v^*v\psi=\psi$. Since  $w,w^*\in \cM^\prime=\pi(a)^\prime_{ss}$ and $\pi$ is closed, we have $w,w^*\in \pi(a)^\prime_{s}$  by (\ref{strongcom}. Therefore,   $w$ and $w^*$ leave the domain $\cD(\pi)$ invariant, so that $\varphi^\prime=w \varphi\in \cD(\pi)$ and $\psi^\prime=v \psi\in \cD(\pi)$. For $a\in A$ we derive
\begin{align*}
\langle\pi(a)\varphi^\prime,\varphi^\prime\rangle&= \langle\pi(a)x\varphi,w\varphi\rangle= \langle w^* \pi(a)w\varphi,\varphi\rangle\\ &= \langle  \pi(a)w^*w\varphi,\varphi\rangle=\langle  \pi(a)\varphi,\varphi\rangle =f(a).
\end{align*}
That is, $\varphi^\prime\in S(\pi,f)$. Similarly,  $\psi^\prime\in S(\pi,g)$.
\end{proof}
Theorem \ref{proptrastcss1} allows us to reduce the computation of the transition probability  of the functionals $f$ and $g$ on $A$ to that of the vector functionals $F_\varphi$ and $F_\psi$ of the von Neumann algebra\, $\cM=(\pi(A)^\prime_{ss})^\prime$.
In the next section we will apply this result in two important situations.

The following theorem generalizes a  classical result of A. Uhlmann \cite{uhlmann76} to the unbounded case.
\begin{tht}
Let $f, g\in \cP(A)$ be such  that the GNS representations $\pi_f$ and $\pi_g$ are essentially self-adjoint. Suppose that there exist a positive linear  functional $h$ on $A$ and elements $b, c\in A$ such that  $f(a)=h(b^+ab)$ and $g(a)=h(c^+ac)$ for $a\in A$. 
Assume that\, $c^+b\in \sum A^2$. 
Then\, $$P_A(f,g)=h(c^+b)^2.$$
\end{tht}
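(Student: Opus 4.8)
The plan is to produce explicit amplitudes realizing the transition probability and to combine them with the upper bound coming from the transition‑form characterization of Theorem \ref{chartransprob}. Start from the GNS representation $\pi_h$ of the ``master'' functional $h$ with cyclic vector $\varphi_h$. Since $f(a)=h(b^+ab)=\langle\pi_h(a)\pi_h(b)\varphi_h,\pi_h(b)\varphi_h\rangle$ and likewise $g(a)=\langle\pi_h(a)\pi_h(c)\varphi_h,\pi_h(c)\varphi_h\rangle$, the vectors $\varphi:=\pi_h(b)\varphi_h$ and $\psi:=\pi_h(c)\varphi_h$ lie in $S(\pi_h,f)$ and $S(\pi_h,g)$ respectively (after passing to a biclosed extension of $\pi_h$ if needed, which does not change membership in the $S$‑sets). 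Then $F_{\varphi,\psi}(1)=\langle\varphi,\psi\rangle=\langle\pi_h(b)\varphi_h,\pi_h(c)\varphi_h\rangle=h(c^+b)$, so by the definition (\ref{transtionprob}) of $P_A(f,g)$ as a supremum we immediately get
\begin{align*}
P_A(f,g)\ \ge\ |\langle\varphi,\psi\rangle|^2\ =\ h(c^+b)^2 .
\end{align*}
Here I use the hypothesis $c^+b\in\sum A^2$ to know that $h(c^+b)\ge 0$, so that $h(c^+b)^2=|h(c^+b)|^2$; this is exactly where that assumption enters the lower bound.

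For the reverse inequality I would invoke Theorem \ref{chartransprob}, which gives $P_A(f,g)=\sup_{F\in\cG(f,g)}|F(1)|^2$, and show that every transition form $F\in\cG(f,g)$ satisfies $|F(1)|\le h(c^+b)$. Given such an $F$, the defining inequality (\ref{defigset}) reads $|F(b_1^+a_1)|^2\le f(a_1^+a_1)g(b_1^+b_1)$ for all $a_1,b_1\in A$. Substituting $a_1:=b$ and $b_1:=c$ and using $f(b^+b)=h(b^+bb^+b)$, $g(c^+c)=h(c^+cc^+c)$ would only give a crude bound, so instead I would exploit the fact that $\pi_f$ and $\pi_g$ are essentially self‑adjoint and use the intertwiner description: by Corollary following Proposition \ref{proponetooneft}, $P_A(f,g)=\sup_{T\in I(\pi_f,(\pi_g)^*),\,\|T\|\le 1}|\langle T\varphi_f,\varphi_g\rangle|^2$, and by Theorem \ref{sumofgnspifg} applied with $\pi:=(\pi_f)^{**}\oplus(\pi_g)^{**}$ one may instead take the supremum over $T$ in the von Neumann algebra $\pi(A)_{ss}'$. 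The key algebraic point: realize $f$ and $g$ as vector functionals inside $\pi_h^{**}$ via $\varphi,\psi$ as above; then $h(c^+b)=\langle\varphi,\psi\rangle$, and for any $T\in\pi_h(A)_{ss}'$ with $\|T\|\le 1$ one estimates $|\langle T\varphi,\psi\rangle|=|\langle T\pi_h(b)\varphi_h,\pi_h(c)\varphi_h\rangle|$. Writing $c^+b=\sum_j d_j^+d_j$ and moving $T$ through the (affiliated) operators $\ov{\pi_h(b)},\ov{\pi_h(c)}$, the expression becomes $\sum_j\langle T\,\ov{\pi_h(d_j)}\varphi_h,\ov{\pi_h(d_j)}\varphi_h\rangle\le\sum_j\|\ov{\pi_h(d_j)}\varphi_h\|^2=\sum_j h(d_j^+d_j)=h(c^+b)$, since $T\ge$ its bound $-I$ forces $|\langle T\eta,\eta\rangle|\le\|\eta\|^2$. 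Hence the supremum defining $P_A(f,g)$ is $\le h(c^+b)^2$.

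The main obstacle I anticipate is the domain bookkeeping needed to justify ``moving $T$ through $\pi_h(b)$ and $\pi_h(c)$'': one must know that $\varphi=\pi_h(b)\varphi_h$ and $\psi=\pi_h(c)\varphi_h$ are cyclic‑type vectors whose associated subrepresentations are (up to unitary equivalence) $\pi_f$ and $\pi_g$, that these are essentially self‑adjoint so that Proposition \ref{technicalself} splits off $\pi_h^{**}=(\rho_f)^{**}\oplus\rho_1=(\rho_g)^{**}\oplus\rho_2$ as in the proof of Theorem \ref{sumofgnspifg}, and that $T\in\pi_h(A)_{ss}'$ therefore genuinely commutes with the closures $\ov{\pi_h(d_j)}$ in the sense $T\,\ov{\pi_h(d_j)}\subseteq\ov{\pi_h(d_j)}\,T$ — which is precisely the definition of $\pi_h(A)_{ss}'$. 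Once this is in place, the computation $\langle T\,\ov{\pi_h(d_j)}\varphi_h,\ov{\pi_h(d_j)}\varphi_h\rangle=\langle\ov{\pi_h(d_j)}^{\,*}T\,\ov{\pi_h(d_j)}\varphi_h,\varphi_h\rangle$ together with $\|T\|\le 1$ closes the argument. Combining the two inequalities gives $P_A(f,g)=h(c^+b)^2$, and the vectors $\varphi,\psi$ constructed above already attain it, matching the ``attained'' clause implicit in Theorem \ref{chartransprob}. \qed
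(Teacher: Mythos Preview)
Your proposal is correct, and the overall architecture matches the paper: the lower bound via the amplitudes $\varphi=\pi_h(b)\varphi_h$, $\psi=\pi_h(c)\varphi_h$ is identical, and for the upper bound you, like the paper, work in the biclosed representation $\pi:=(\pi_h)^{**}$ and invoke Theorem~\ref{sumofgnspifg} to reduce matters to bounding $|\langle T\varphi,\psi\rangle|$ for $T\in\pi(A)_{ss}'$, $\|T\|\le 1$. (Your detours through $(\pi_f)^{**}\oplus(\pi_g)^{**}$ and Proposition~\ref{technicalself} are unnecessary; you can apply Theorem~\ref{sumofgnspifg} directly to $(\pi_h)^{**}$.)

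Where you genuinely diverge is in the estimate itself. The paper sets $R:=\pi(c^+b)$, uses the Cauchy--Schwarz inequality for the positive sesquilinear form $\langle R\cdot,\cdot\rangle$ to get $|\langle RT\varphi_h,\varphi_h\rangle|^2\le\langle RT\varphi_h,T\varphi_h\rangle\,h(c^+b)$, and then needs a nontrivial operator-theoretic fact (a positive self-adjoint extension $\tilde R\supseteq R$ with $T\tilde R\subseteq\tilde R T$, hence $T\tilde R^{1/2}\subseteq\tilde R^{1/2}T$) to conclude $\langle RT\varphi_h,T\varphi_h\rangle\le h(c^+b)$. Your argument is more elementary: from $T\in\pi(A)_s'$ one rewrites $\langle T\pi(b)\varphi_h,\pi(c)\varphi_h\rangle=\langle\pi(c^+b)T\varphi_h,\varphi_h\rangle=\sum_j\langle T\pi(d_j)\varphi_h,\pi(d_j)\varphi_h\rangle$ and then bounds each summand by $\|\pi(d_j)\varphi_h\|^2$ using only $\|T\|\le 1$. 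This avoids the self-adjoint-extension step entirely. The price is that your bound uses the explicit decomposition $c^+b=\sum_j d_j^+d_j$, whereas the paper's argument works under the weaker hypothesis that $\pi(c^+b)\ge 0$ (as noted in the paper's Remark~1 after the proof); but under the stated hypothesis $c^+b\in\sum A^2$, your route is cleaner. One small wording issue: you do not need $T$ to commute with closures $\overline{\pi_h(b)},\overline{\pi_h(c)}$ as affiliated operators; the algebraic commutation on $\cD(\pi)$ coming from $T\in\pi(A)_s'$ (valid since $\pi$ is closed) is all that is used.
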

\begin{proof}
Recall that $\pi_h$ is the GNS representation of $h$  with algebraically cyclic vector $\varphi_h$. By the assumptions $f(\cdot)=h(b^+\cdot b)$ and $g(\cdot)=h(c^+\cdot c)$ we  have 
$\pi_h(b)\varphi_h\in S(\pi_h,f)$ and $\pi_h(c)\varphi_h\in S(\pi_h,g)$. Therefore,  $$h(c^+b)^2=\langle \pi_h(b)\varphi_h,\pi_h(c)\varphi \rangle^2 \leq P_A(f,g).$$

To prove the converse inequality we want to apply Theorem \ref{sumofgnspifg} to the biclosed representation\,  $\pi:=(\pi_h)^{**}$. Suppose that\, $T\in \pi(A)^\prime_{ss}$ and $\|T\|\leq 1$.
Set $R:=\pi(c^+b)$. Since\, $c^+b \in\sum\, A^2$\, by assumption, $R$ is a positive, hence symmetric, operator. Since\, $\pi:=(\pi_h)^{**}$\, is closed, we  have\, $T\in  \pi(A)^\prime_{s}$.
Using these facts and the Cauchy-Schwarz inequality we derive 
\begin{align}\label{tvarphibc1}
|\langle T\pi_h(b)\varphi_h ,\pi_h(c)\varphi_h\rangle|^2 &=|\langle T\pi(b)\varphi_h,\pi(c)\varphi_h\rangle|^2
= |\langle \pi(b)T\varphi_h,\pi(c)\varphi_h\rangle|^2\nonumber\\&=| \langle RT\varphi_h,\varphi_h\rangle|^2 \leq \langle RT\varphi_h,T\varphi_h\rangle \langle R\varphi_h,\varphi_h\rangle\nonumber \\& =\langle RT\varphi_h,T\varphi_h\rangle h(c^+b). 
\end{align}
Since $T\in \pi(A)^\prime_{ss}$, we have $TR\subseteq RT$. There exists a positive self-adjoint extension $\tilde{R}$ of $R$ on $\Hh(\pi)$ such that\, $T\tilde{R}\subseteq \tilde{R}T$ \cite[Exercise 14.14]{schmu12}. The latter implies that\, $T\tilde{R}^{1/2}\subseteq \tilde{R}^{1/2}T$\, and hence
\begin{align}\label{tvarphibc2}
\langle RT\varphi_h,T\varphi_h\rangle &=\langle \tilde{R}T\varphi_h,T\varphi_h\rangle=\langle \tilde{R}^{1/2}T\varphi_h,\tilde{R}^{1/2}T\varphi_h\rangle
\nonumber \\ &=\langle T \tilde{R}^{1/2}\varphi_h,T\tilde{R}^{1/2}\varphi_h\rangle\nonumber \leq \langle \tilde{R}^{1/2}\varphi_h,\tilde{R}^{1/2}\varphi_h\rangle\nonumber \\&=
\langle \tilde{R} \varphi_h,\varphi_h\rangle=\langle R \varphi_h,\varphi_h\rangle=\langle \pi_h(c^+b) \varphi_h,\varphi_h\rangle=h(c^+b)
\end{align}
Inserting (\ref{tvarphibc2}) into (\ref{tvarphibc1}) we  get $$|\langle T\pi_h(b)\varphi_h,\pi_h(c)\varphi_h\rangle|^2\leq h(c^+b).$$ Hence $P_A(f,g)\leq h(c^+b)$\, by Theorem \ref{sumofgnspifg}.
\end{proof}
Remarks. 1. The assumption $c^+b\in \sum A^2$ was only needed to ensure that the operator\, $R=\pi(c^+b)\equiv(\pi_h)^{**}(c^+b)$\, is  {\it positive}. Clearly, this is satisfied if\, $F(c^+b)\geq 0$\, for all positive linear functionals $F$ on $A$. 

2. If the closures of the GNS representations $\pi_f$ and $\pi_g$ are self-adjoint, we can set\, $\pi:=\ov{\pi}_h$ in the  preceding proof and it suffices to assume that\, $h(a^+c^+ba)\geq 0$\, for all $a\in A$ instead of\, $c^+b\in \sum A^2.$
\section{Two Applications}\label{appl}

To formulate our first application we begin with  some  preliminaries. 

Let  $\rho$ be a closed $*$-representation of $A$. We denote by 
${\bf B}_1(\rho(A))_+$  the set of positive trace class operators on $\Hh(\rho)$ such that $t\Hh(\rho)\subseteq \cD(\rho)$ and the closure of  $\rho(a)t\rho(b)$ is  trace class   for all $a,b\in A$. 

Now let $t\in{\bf B}_1(\rho(A))_+$. We define a positive linear functional $f_t$ by
$$
f_t(a):={\rm Tr}\, \rho(a)t,\quad a\in A,
$$
where ${\rm Tr}$ always denotes the trace on the Hilbert space $\Hh(\rho)$. Note that $f_t(a)\geq 0$ if  $\rho(a)\geq 0$ (that is, $\langle \rho(a)\varphi,\varphi\rangle \geq 0$ for all $\varphi\in \cD(\rho)$).

In unbounded representation theory  a large class of positive linear functionals is  of the form $f_t$. We illustrate this by restating the  following theorem proved in \cite{schmu78}. Recall that a {\it Frechet--Montel space} is a complete metrizable locally convex space such that each bounded sequence has a convergent subsequence.  

\begin{tht}\label{tracereptheorem}
Let $f$ be a linear functional on $A$ and let $\rho$ be a closed $*$-representation of $A$. Suppose   that the locally convex space\, $\cD(\rho)[\gt_{\rho(A)}]$\, is a Frechet-Montel space and $f(a)\geq 0$ whenever  $\rho(a)\geq 0$  for $a\in A$. Then 
there  exists 
an operator\, $t\in {\bf B}(\rho(A))_+$\, such that\, $f=f_t$, that is, $f(a)={\rm Tr}~ \rho(a)t$\, for $a\in \cA$.
\end{tht}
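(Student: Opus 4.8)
The plan is to use the general representation theory of continuous linear functionals on a Frechet--Montel space, following the argument of \cite{schmu78}. First I would equip $\cD(\rho)$ with the graph topology $\gt_{\rho(A)}$, under which it is by hypothesis a Frechet--Montel space, hence reflexive and nuclear. The first step is to show that the functional $f$ extends (or lifts) to a continuous sesquilinear form $\Phi$ on $\cD(\rho)[\gt_{\rho(A)}]$. The natural candidate is $\Phi(\varphi,\psi):=$ ``$f$ evaluated on the rank-one piece $|\varphi\rangle\langle\psi|$'', made precise by first defining $\Phi$ on the algebraic span of vectors of the form $\rho(a)\varphi_0$ (using an algebraically cyclic or generating vector if one is available, or more generally by a polarization/matrix-coefficient argument) and then using the positivity assumption $f(a)\ge 0$ whenever $\rho(a)\ge0$ to obtain a Cauchy--Schwarz-type estimate $|\Phi(\varphi,\psi)|^2\le \Phi(\varphi,\varphi)\Phi(\psi,\psi)$; continuity of $\varphi\mapsto\Phi(\varphi,\varphi)$ with respect to some seminorm $\|\cdot\|_a=\|\rho(a)\cdot\|$ is what makes $\Phi$ jointly $\gt_{\rho(A)}$-continuous.

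The second step is the kernel theorem. A jointly continuous sesquilinear form on a nuclear Frechet space $\cD(\rho)[\gt_{\rho(A)}]$ is represented by an element $t$ of (a suitable completion of) $\cD(\rho)\otimes\cD(\rho)$; concretely, since the form is continuous in the norm $\|\cdot\|_a$ for some fixed $a$, one gets a bounded operator, and the Montel (hence Schwartz-type) property upgrades this to a nuclear, i.e. trace class, operator $t$ on $\Hh(\rho)$ with $\Phi(\varphi,\psi)=\langle t\varphi,\psi\rangle$ in an appropriate sense. Positivity of $\Phi$ gives $t\ge0$. The identity $f(a)=\Tr\,\rho(a)t$ then follows by testing $\Phi$ against the graph-continuous operators $\rho(a)$ and writing the trace in terms of an orthonormal eigenbasis of $t$, whose eigenvectors lie in $\cD(\rho)$ because $\Phi$ is $\gt_{\rho(A)}$-continuous; this is also where one checks $t\Hh(\rho)\subseteq\cD(\rho)$ and that the closure of $\rho(a)t\rho(b)$ is trace class, i.e. that $t\in{\bf B}_1(\rho(A))_+$.

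The third step is to verify that the $t$ so produced genuinely has all the regularity packaged into the definition of ${\bf B}_1(\rho(A))_+$ from the start of the section. The trace class property of $\overline{\rho(a)t\rho(b)}$ should come from continuity of $\Phi$ in two separate graph seminorms once one localizes in $a$ and $b$ and invokes nuclearity again (a continuous bilinear form on a product of seminormed pieces of a nuclear space factors through trace class operators). I would expect the main obstacle to be precisely the passage from ``bounded/Hilbert--Schmidt'' to ``trace class'': one must exploit the Montel property carefully — boundedness of sequences in $\gt_{\rho(A)}$ forcing convergent subsequences is the analytic input that, combined with the metrizability, yields nuclearity of the space and hence the nuclearity (trace class) of the kernel. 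Handling the domain issues — ensuring $t$ maps $\Hh(\rho)$ into $\cD(\rho)$ and that all the relevant closures exist and are trace class, rather than merely densely defined — is the delicate bookkeeping, and this is exactly where the hypothesis that $\rho$ is \emph{closed} (so $\cD(\rho)[\gt_{\rho(A)}]$ is complete) is essential. Since this is the content of the already-published \cite{schmu78}, I would ultimately cite that paper for the full details.
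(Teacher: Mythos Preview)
The paper does not prove this theorem; it merely restates a result from \cite{schmu78} and cites that reference. In that sense your final sentence --- defer to \cite{schmu78} --- is exactly what the paper does, and there is nothing further to compare.

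That said, your sketch contains a genuine error that would derail the argument if you tried to carry it out as written. You assert that a Fr\'echet--Montel space is ``hence reflexive and nuclear'', and later that the Montel property ``combined with the metrizability, yields nuclearity of the space''. This is false: for Fr\'echet spaces the implications run
\[
\text{nuclear} \;\Longrightarrow\; \text{Schwartz} \;\Longrightarrow\; \text{Montel},
\]
and neither arrow reverses (there are K\"othe sequence spaces that are Montel but not Schwartz). Consequently the kernel-theorem step, which you invoke in the nuclear form, is not available under the stated hypothesis. The actual argument in \cite{schmu78} does not pass through nuclearity; it exploits the Montel property directly (relative compactness of bounded sets in the graph topology forces the relevant embedding operators to be compact, and an interpolation/factorization through these compact embeddings is what produces a trace-class density $t$). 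Your first step --- extracting a positive sesquilinear form from $f$ and showing it is graph-continuous via the positivity hypothesis --- is the right opening move, but the upgrade from ``bounded'' to ``trace class'' has to come from Montel compactness, not from a nuclearity you do not have.
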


Further, let $\cM$ be a type $I$ factor acting on  the Hilbert space $\Hh(\rho)$ and let ${\rm tr_\cM}$ denote its canonical trace. Since in particular $t$ is of trace class,   $F_t(x)={\rm Tr}\, x t$, $x\in \cM$, defines a positive normal linear functional $F_t$ on $\cM$. Hence there exists a unique positive element $\hat{t}\in \cM$ such that\, ${\rm tr}_\cM\, (\, \hat{t}\, )<\infty$ and 
\begin{align}\label{that}
F_t(x)\equiv {\rm Tr}\, x t={\rm tr}_\cM\, x\hat{t}\quad {\rm for}\quad x\in \cM.
\end{align}
The element $\hat{t}$ can be obtained as follows. Since $\cM$ is a type $I$ factor, there exist Hilbert spaces $\Hh_0$ and $\Hh_1$ such that, up to unitary equvalence,  $\Hh(\pi)=\Hh_0\otimes \Hh_1$\, and\, $\cM={\bf B}(\Hh_0)\otimes \dC\cdot I_{\Hh_1}$. The canonical trace of $\cM$ is then given by\, ${\rm tr}_\cM (y\otimes \lambda\cdot I_{\Hh_1}):= {\rm Tr}_1\,\lambda y$, where\, ${\rm Tr}_1$ denotes  the trace on the Hilbert space $\Hh_1$. Now $\tilde{F}_t(y):= F_t(y\otimes I_{\Hh_1})$, $y\in{\bf B}(\Hh_0)$,  defines a positive normal  linear functional $\tilde{F}_t$ on ${\bf B}(\Hh_0)$. Hence there exists a unique positive trace class operator $\tilde{t}$ on the Hilbert  space $\Hh_0$ such  $\tilde{F}_t(y)={\rm Tr}\, y\tilde{t}$\, for $y\in{\bf B}(\Hh_0)$. Set $\hat{t}:=\tilde{t}\otimes I_{\Hh_1}$. Then we have ${\rm tr}_\cM\, \hat{t}={\rm Tr}_1\, t< \infty$ and (\ref{that}) holds by construction. 

Note that ${\rm tr}_\cM= {\rm Tr}$\, and\, $t=\hat{t}$\, if\, $\cM={\bf B}(\Hh(\rho))$.

\begin{tht}\label{tracegen}
Let\, $\rho$\, be a closed $*$-representation of $A$ such that  the von Neumann algebra $\cM:=(\rho(A)^\prime_{ss})^\prime$ is  a type $I$ factor. 
For $s,t\in {\bf B}(\rho(A))_+$, let $f_s, f_t $ denote the positive linear  functionals on $\cA$ defined by
\begin{align*}
f_s(a)={\rm Tr}\, \rho(a)s, \quad f_t(a)={\rm Tr}\, \rho(a)t \quad {\rm for}\quad a\in \cA.
\end{align*} Suppose that the GNS representations  $\pi_{f_s}$ and $\pi_{f_t}$ are essentially self-adjoint. Then 
\begin{align}\label{fsfttrans0}
P_A(f_s,f_t)= ({\rm tr}_\cM\,|\hat{t}^{1/2}\hat{s}^{1/2}|)^2= (\,{\rm tr}_\cM\, (\hat{s}^{1/2}\, \hat{t}\, \hat{s}^{1/2})^{1/2}\, )^2.
\end{align}
\end{tht}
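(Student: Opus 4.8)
The plan is to reduce the statement, via Theorem \ref{sumofgnspifg1}, to a computation of the transition probability of two vector functionals on the type $I$ factor $\cM$, and then to invoke the classical finite-dimensional--type formula for the transition probability of normal states given by density operators on a type $I$ factor. First I would choose representing vectors for $f_s$ and $f_t$: since $s,t\in{\bf B}(\rho(A))_+$, their square roots $s^{1/2}$, $t^{1/2}$ are Hilbert--Schmidt operators, and identifying the Hilbert--Schmidt operators on $\Hh(\rho)$ with $\Hh(\rho)\otimes\ov{\Hh(\rho)}$ one gets a ``purification'' vector $\varphi_s$ with $\langle x\varphi_s,\varphi_s\rangle={\rm Tr}\, x s$ for $x\in{\bf B}(\Hh(\rho))$, and similarly $\varphi_t$. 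One must first check $\varphi_s\in S(\rho,f_s)$, i.e.\ $\varphi_s\in\cD(\rho)$ and $\langle\rho(a)\varphi_s,\varphi_s\rangle=f_s(a)$; this uses precisely the defining property of ${\bf B}(\rho(A))_+$ that $\ov{\rho(a)s\rho(b)}$ is trace class and $s\Hh(\rho)\subseteq\cD(\rho)$, together with the fact that $\rho$ is closed (so $\cD(\rho)$ is complete in the graph topology and the relevant sums converge there).

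Next, since $\rho$ is closed, $\pi:=\rho^{**}$ is biclosed, $S(\pi,f_s)\supseteq S(\rho,f_s)\ni\varphi_s$ and similarly for $t$, so Theorem \ref{sumofgnspifg1} applies and gives
\begin{align}\label{stepone}
P_A(f_s,f_t)=P_\cM(F_{\varphi_s},F_{\varphi_t}),
\end{align}
where $F_{\varphi_s}(x)=\langle x\varphi_s,\varphi_s\rangle$ and $F_{\varphi_t}(x)=\langle x\varphi_t,\varphi_t\rangle$ for $x\in\cM=(\rho(A)^\prime_{ss})^\prime$. Here I would need to know that $F_{\varphi_s}$ restricted to $\cM$ coincides with the functional $x\mapsto{\rm tr}_\cM\, x\hat s$ from \eqref{that}: indeed $F_{\varphi_s}(x)={\rm Tr}\, xs$ for all $x\in{\bf B}(\Hh(\rho))$, in particular for $x\in\cM$, and by definition of $\hat s$ this equals ${\rm tr}_\cM\, x\hat s$. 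Thus on $\cM$ the functional $F_{\varphi_s}$ is the normal state (up to normalization) with density $\hat s$ relative to the canonical trace ${\rm tr}_\cM$, and likewise $F_{\varphi_t}$ has density $\hat t$.

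The final step is the evaluation of $P_\cM(F_{\varphi_s},F_{\varphi_t})$. Writing $\cM={\bf B}(\Hh_0)\otimes\dC\cdot I_{\Hh_1}$ and $\hat s=\tilde s\otimes I_{\Hh_1}$, $\hat t=\tilde t\otimes I_{\Hh_1}$ with $\tilde s,\tilde t$ trace class on $\Hh_0$, the transition probability on $\cM\cong{\bf B}(\Hh_0)$ of the functionals ${\rm Tr}\,(\cdot)\tilde s$ and ${\rm Tr}\,(\cdot)\tilde t$ is the standard Uhlmann formula $\big({\rm Tr}_0|\tilde t^{1/2}\tilde s^{1/2}|\big)^2=\big({\rm Tr}_0(\tilde s^{1/2}\tilde t\,\tilde s^{1/2})^{1/2}\big)^2$; since ${\rm tr}_\cM(y\otimes I_{\Hh_1})={\rm Tr}_0\, y$ this is exactly $({\rm tr}_\cM|\hat t^{1/2}\hat s^{1/2}|)^2=({\rm tr}_\cM(\hat s^{1/2}\hat t\,\hat s^{1/2})^{1/2})^2$, which combined with \eqref{stepone} is the claimed identity. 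I expect the main obstacle to be the first step — rigorously producing the purification vectors $\varphi_s,\varphi_t$ inside $\cD(\rho)$ and verifying that they represent $f_s,f_t$ as vector functionals of $\rho$ and simultaneously induce the densities $\hat s,\hat t$ on $\cM$ — because this is where the unbounded, trace-class, and von Neumann-algebraic structures all have to be matched up; the reduction via Theorem \ref{sumofgnspifg1} and the appeal to the bounded Uhlmann formula are then comparatively routine.
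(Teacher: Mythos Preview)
Your overall strategy---purify $s$ and $t$ to vectors in an amplified representation, apply Theorem \ref{sumofgnspifg1}, identify the resulting von Neumann algebra with $\cM$, and then invoke the bounded Uhlmann formula---is exactly the paper's approach. The paper carries this out in $\rho_\infty=\bigoplus_{n}\rho$ with purification vector $\Phi=(\lambda_n^{1/2}\varphi_n)_n$ built from the spectral decomposition $s=\sum_n\lambda_n\,\langle\,\cdot\,,\varphi_n\rangle\varphi_n$; this is the same object as your $\varphi_s=s^{1/2}$ viewed in the Hilbert--Schmidt space $\Hh(\rho)\otimes\ov{\Hh(\rho)}\cong\bigoplus_n\Hh(\rho)$.

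There is, however, a real slip in your second paragraph: the purification vector $\varphi_s$ lives in $\Hh(\rho)\otimes\ov{\Hh(\rho)}$, \emph{not} in $\Hh(\rho)$, so it cannot belong to $S(\rho,f_s)$ or $\cD(\rho)$, and taking $\pi:=\rho^{**}$ is the wrong representation to feed into Theorem \ref{sumofgnspifg1}. You must instead take $\pi$ to be (the biclosure of) the amplification $\rho\otimes I$ on $\Hh(\rho)\otimes\ov{\Hh(\rho)}$, and then the von Neumann algebra $(\pi(A)'_{ss})'$ appearing in Theorem \ref{sumofgnspifg1} is \emph{a priori} not $(\rho(A)'_{ss})'=\cM$ but $((\rho\otimes I)(A)'_{ss})'$. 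A short extra computation---done explicitly in the paper for $\rho_\infty$---shows this algebra is $\cM\otimes\dC\cdot I\cong\cM$, and under this isomorphism the vector functional $F_{\varphi_s}$ becomes $x\mapsto{\rm Tr}\,xs={\rm tr}_\cM\,x\hat s$ as you want. Once this is straightened out, the rest of your outline goes through and coincides with the paper's proof.
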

\begin{proof}
Let $\rho_\infty$ be the orthogonal sum 
$\oplus_{n=0}^\infty~ \rho$ on\, $\Hh_\infty=\oplus_{n=0}^\infty~ \Hh(\rho)$.\,  Since $\rho$ is  biclosed, so is the $*$-representation  $\rho_\infty$ of $A$.
We want to apply Theorem \ref{sumofgnspifg1}.
First we will describe the GNS representations\, $\pi_{f_s}$\, and\, $\pi_{f_t}$\, as  $*$-subrepresentations of\, $\rho_\infty$. 

The result is well-known if $\Hh(\rho)$ is finite dimensional \cite{uhlmann76}, so we can assume that $\Hh(\rho)$ is infinite dimensional.
Since $s\in{\bf B}_1(\rho(A))_+$,  there are a sequence $(\lambda_n)_{n\in \dN}$ of nonnegative numbers and an orthonormal sequence $(\varphi_n)_{n\in \dN}$ of $\Hh(\rho)$ such that $\varphi_n\in \cD(\rho)$ for $n\in \dN$, 
$$s\varphi =\sum_n \langle \varphi, \varphi_n\rangle\lambda_n\varphi_n \quad{\rm for}\quad \varphi\in \Hh(\rho),$$
and $(\rho(a)\lambda_n^{1/2}\varphi_n)_{n\in \dN}\in \Hh_\infty$ for all $a\in \cA$. Further, for $a\in A$ we  have 
\begin{align}\label{f_s}
 f_s(a)=\sum_{n=1}^\infty\, \langle \rho(a) \varphi_n,\lambda_n\varphi_n\rangle. 
 \end{align}
All these facts are contained in Propositions 5.1.9 and 5.1.12 in \cite{schmu90}. Hence  
$$\rho_\Phi(a)(\rho(b)\lambda_n^{1/2}\varphi_n):=(\rho(ab)\lambda_n^{1/2}\varphi_n),\quad a,b\in \cA,$$
 defines a $\ast$-representation $\rho_\Phi$ of $\cA$ on the domain $$\cD(\rho_\Phi):=\{(\rho(a)\lambda_n^{1/2}\varphi_n)_{n\in \dN};a\in \cA\}$$ with algebraically cyclic vector\,  $\Phi:=(\lambda_n^{1/2}\varphi_n)_{n\in \dN}$. From  (\ref{f_s}) we derive
$$
f_s(a) =\sum_{n=1}^\infty\, \langle \rho(a) \lambda_n^{1/2}\varphi_n,\lambda_n^{1/2}\varphi_n\rangle =\langle \rho_\Phi (a)\Phi,\Phi \rangle=:f_\Phi(a),~~a\in A ,
$$
that is, $f_s$ is equal to the vector functional $f_\Phi$ in the representaton $\rho_\Phi$. Therefore, by the uniqueness of the GNS representation, $\pi_{f_s}$ is unitarily equivalent to $\rho_\Phi$. 
Likewise, the GNS representation $\pi_{f_t}$ is unitarily equivalent to the corresponding $*$-representation $\rho_\Psi$, where 
$t\varphi=\sum_n \langle \varphi ,\psi_n\rangle\mu_n\psi_n$
 is  a corresponding representation  of\, the operator $t$\, and\, $\Psi:=(\mu_n^{1/2}\psi_n)_{n\in \dN}$. Clearly, since\, 
$\rho_\Phi \subseteq \rho_\infty$\, and\, $\rho_\Phi \subseteq \rho_\infty$,\, we have $\Phi \in S(\rho_\infty, f_s)$ and 
$\Psi \in S(\rho_\infty, f_t)$. 

Let $\cM_\infty$ denote the von Neumann algebra $(\rho_\infty(A)^\prime_{ss})^\prime$. Then, by Theorem \ref{sumofgnspifg1}, we have 
\begin{align}\label{papm}P_A (f_s,f_t)\equiv P_A(f_\Phi,f_\Psi)=P_{\cM_\infty}( F_\Phi, F_\Psi).
\end{align}

Let $x\in {\bf B}(\Hh_\infty)$. We write $x$ as a matrix\, $(x_{jk})_{j,k\in \dN}$\, with entries\, $x_{jk}\in {\bf B}(\Hh(\rho)))$. Clearly, $x$ belongs to in $\rho_\infty(\cA)^\prime_{ss}$\, if and only if each entry $x_{jk}$ is in\, $\rho(A)^\prime_{ss}$. Further, it is easily verified that $x$ is in\,  $(\rho_\infty(\cA)^\prime_{ss})^\prime$\, if and only if there is a (uniquely determined) operator $x_0 \in(\rho(A)^\prime_{ss})^\prime$ such that $x_{jk}=\delta_{jk} x_0$ for all $j,k\in \dN$. The map $\pi(x_0):=x$\, defines a $*$-isomorphism of  von Neumann algebras $\cM:=(\rho(A)^\prime_{ss})^\prime$ and $\cM_\infty=(\rho_\infty(\cA)^\prime_{ss})^\prime$, that is, $\pi$ is a $\ast$-representation of $\cM$. 

As above, we let $F_s$ and $F_t$ denote the normal functionals on  $\cM$ defined by $F_s(x):={\rm Tr}\, xs$ and $F_t(x):={\rm Tr}\, xt$, $x\in \cM$. Repeating the preceding reasoning with $\rho$ and $A$ replaced by $\pi$ and $\cM$, respectively, we obtain $F_s(\cdot ) = \langle \pi_0(\cdot)  \Phi,\Phi \rangle\equiv F_\Phi(\cdot)$ and $F_t=F_\Psi$. Hence $P_\cM(F_s,F_t)=P_{\cM_\infty}(F_\Phi,F_\Psi)$, so that 
\begin{align}\label{pafsftm}
P_A (f_s,f_t) =P_\cM(F_s,F_t)
\end{align} by (\ref{papm}). It is proved in \cite[Corollary 1]{alberti03} (see also \cite{uhlmann76}) that 
$$P_\cM(F_s,F_t)= ({\rm tr}_\cM\,|\hat{t}^{1/2}\, \hat{s}^{1/2}|)^2.$$
Combined with (\ref{pafsftm}) this yields  (\ref{fsfttrans})  and completes the proof. 
\end{proof}
Let us remain the assumptions and the  notations of Theorem \ref{tracegen}. In general, $P_A(f_s,f_t)$ is  different from $({\rm Tr}\, ({s}^{1/2}\, {t}\, {s}^{1/2})^{1/2}\, )^2$ a simple examples show. Howover, if in addition $\rho$ is {\it irreducible} (that is, if $\rho(A)^\prime_{ss}=\dC\cdot I$), then $s=\hat{s}$ and $t=\hat{t}$ as noted above and therefore by (\ref{fsfttrans0}) we have
\begin{align}\label{trairr}
P_A(f_s,f_t)= (\,{\rm Tr}\, (s^{1/2}\, t\, s^{1/2})^{1/2}\, )^2.
\end{align}

\smallskip

We now apply  the preceding theorem to an interesting example. 
\begin{thex}\label{schroding} ({\it Schr\"odinger representation of the Weyl algebra})\\
Let $A$ be the Weyl algebra, that is, $A$ is the unital $*$-algebra generated by two hermitian generators $p$ and $q$ satisfying $$pq-qp=-\ii 1 ,$$ and let $\rho$ be the Schr\"odinger representation of $A$, that is,  
\begin{align}\label{schrorep}(\rho(q)\varphi )(x)=x\varphi (x), \quad (\rho(p)\varphi)(x)=-\ii \frac{d}{dx}\varphi(x),\quad \varphi \in\cD(\rho):=\cS(\dR),
\end{align}
 on $L^2(\dR)$. Since $\rho$ is irreducible,  $\rho_\infty(A)^\prime_{ss}=\dC \cdot I$. Hence\, $\cM={\bf B}(\Hh(\rho))$\, and\, ${\rm tr}_\cM={\rm Tr}$. Therefore, if $s,t\in {\bf B}(\pi(A))_+$ and the GNS representations  $\pi_{f_s}$ and $\pi_{f_t}$ are essentially self-adjoint, it follows from  Theorem \ref{tracegen} and  formula (\ref{trairr})  that
  \begin{align}\label{fsfttrans}
P_A(f_s,f_t)= ({\rm Tr}\,|t^{1/2}s^{1/2}|)^2= (\,{\rm Tr}\, (s^{1/2}ts^{1/2})^{1/2}\, )^2.
\end{align}

Let us specialize this to the rank one  case, that is, let $s=\varphi\otimes \varphi$ and $t=\psi\otimes \psi$ with $\varphi,\psi \in \cD(\rho)$, so that $f_s(a)=\langle \rho(a)\varphi,\varphi\rangle$ and $f_t(a)=\langle \rho(a)\psi,\psi\rangle$ for $a\in A$. Then formula (\ref{fsfttrans}) yields  
\begin{align}\label{fsftvector}
P_A(f_s,f_t)= |\langle \varphi,\psi\rangle |^2.
\end{align}
Recall that (\ref{fsftvector})  holds under the assumption that the GNS representations  $\pi_{f_s}$ and $\pi_{f_t}$ are essentially self-adjoint.
We shall see in section\, \ref{vectorweyl} below that  (\ref{fsftvector}) is no longer true if the latter assumption is omitted.
\end{thex}
Now we turn to the second main application.
\begin{tht}\label{commtrans} Let $X$ be a locally compact topological Hausdorff space and let $A$ be a  $*$-subalgebra of\, $C(X)$\,  which contains the constant function $1$ and separates the points of $X$.  Let $\mu$ be a positive regular Borel measure on $X$ such that $A\subseteq L^1(X,\mu)$ and let $\eta, \xi\in L^\infty(X,\mu)$ be nonnegative functions. Define positive linear functionals $f_\eta$ and $f_\xi$ on $A$ by
\begin{align}\label{fetaxi} f_\eta(a)=\int_X a(x)\eta(x)\, d\mu(x),\quad  f_\xi(a)=\int_X a(x)\xi(x)\, d\mu(x)\, \, {\rm for}\,\,  a\in  A.
\end{align}
Suppose that the GNS representations  $\pi_{f_\eta}$ and $\pi_{f_\xi}$ are essentially self-adjoint. Then 
\begin{align}\label{commutativecase}
P_\cA(f_\eta,f_\xi)= \bigg(\,\int_X\, \eta(x)^{1/2}\xi(x)^{1/2}\, d\mu(x) \bigg)^2. 
\end{align}
\end{tht}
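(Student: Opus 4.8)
The plan is to reduce Theorem \ref{commtrans} to Theorem \ref{tracegen} by realizing the commutative situation as a trace-functional situation for a suitable closed $*$-representation $\rho$ whose symmetrized commutant generates (after taking the double commutant) a type $I$ factor. The natural candidate is the multiplication representation $\rho$ of $A$ on the Hilbert space $\Hh(\rho):=L^2(X,\mu)$, given by $(\rho(a)\varphi)(x)=a(x)\varphi(x)$ on the domain $\cD(\rho)$ consisting of functions $\varphi$ for which $a\varphi\in L^2(X,\mu)$ for all $a\in A$ (this domain is exactly $\cD(\ov\rho)$, so $\rho$ is closed; one should check it is biclosed, or just invoke the Remark after Theorem \ref{sumofgnspifg} since the GNS closures will turn out to be self-adjoint in the commutative determinate case — indeed essential self-adjointness plus commutativity forces $\ov{\pi}_{f_\eta}=(\pi_{f_\eta})^*$).

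**Key steps.** First I would identify the von Neumann algebra $\cM:=(\rho(A)^\prime_{ss})^\prime$. Since $A$ separates points of $X$ and the functions of $A$ act by multiplication, the weak closure of $\rho(A)$ is the full multiplication algebra $L^\infty(X,\mu)$, which is maximal abelian; hence $\rho(A)^\prime_{ss}=L^\infty(X,\mu)=\cM$ and $\cM$ is abelian, in particular a type $I$ factor only if $L^\infty(X,\mu)=\dC\cdot I$, i.e. $\mu$ is a point mass. That is too restrictive, so the correct move is \emph{not} to force a factor but instead to invoke the commutative special case of Uhlmann's theorem directly on $\cM=L^\infty(X,\mu)$. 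Concretely: the functions $\eta,\xi\in L^\infty(X,\mu)$ are not generally of trace class, but the relevant object is the \emph{normal state} comparison on the abelian von Neumann algebra $L^\infty(X,\mu)$. So I would replace the appeal to Theorem \ref{tracegen} by the following chain. The vectors $\varphi:=\eta^{1/2}\in\cD(\rho)$? — not obviously in $L^2$ unless $\eta\in L^1$; but $f_\eta(a)=\int a\,\eta\,d\mu=\langle\rho(a)\eta^{1/2},\eta^{1/2}\rangle$ requires $\eta^{1/2}\in L^2$, equivalently $\eta\in L^1(X,\mu)$, which holds because $\eta\in L^\infty$ and we may need $\mu$ finite on the relevant support — actually $\eta\in L^\infty$ and $f_\eta(1)=\int\eta\,d\mu$ must be finite for $f_\eta$ to be a well-defined functional with $1\in A$, so indeed $\eta\in L^1(X,\mu)$. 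Hence $\eta^{1/2},\xi^{1/2}\in L^2(X,\mu)=\Hh(\rho)$, these lie in $\cD(\rho)$ (multiply by any $a\in A\subseteq L^\infty$ stays in $L^2$... wait, $A\subseteq C(X)$ need not be $L^\infty$), so more carefully $\eta^{1/2}\in\cD(\rho)$ iff $a\eta^{1/2}\in L^2$ for all $a\in A$, i.e. $|a|^2\eta\in L^1$, which holds since $a^+a\in A\subseteq L^1(X,\mu)$ and $\eta\in L^\infty$. Good. So $\eta^{1/2}\in S(\rho,f_\eta)$ and $\xi^{1/2}\in S(\rho,f_\xi)$.

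**Conclusion of the argument.** With $\pi:=\ov\rho$ (closed, and the GNS closures self-adjoint by the commutativity-plus-essential-self-adjointness remark), Theorem \ref{sumofgnspifg1} gives $P_A(f_\eta,f_\xi)=P_{\cM}(F_{\eta^{1/2}},F_{\xi^{1/2}})$ where $\cM=L^\infty(X,\mu)$ and $F_{\eta^{1/2}}(h)=\int h\,\eta\,d\mu$, $F_{\xi^{1/2}}(h)=\int h\,\xi\,d\mu$ for $h\in\cM$. It now remains to compute the transition probability of two normal positive functionals on an abelian von Neumann algebra. Using Corollary after Proposition \ref{proponetooneft} (or directly Theorem \ref{chartransprob}): on $\cM=L^\infty(X,\mu)$ acting on $L^2(X,\mu)$, the intertwiners $T\in I(\mathrm{id},\mathrm{id})=\cM'=L^\infty(X,\mu)$ are again multiplication operators by $k\in L^\infty$ with $\|k\|_\infty\le 1$, so
\begin{align*}
P_{\cM}(F_{\eta^{1/2}},F_{\xi^{1/2}})=\sup_{\|k\|_\infty\le1}\Big|\int_X k(x)\,\eta(x)^{1/2}\xi(x)^{1/2}\,d\mu(x)\Big|^2=\Big(\int_X\eta^{1/2}\xi^{1/2}\,d\mu\Big)^2,
\end{align*}
the supremum being attained at $k\equiv1$ by positivity of the integrand. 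Combining the two displays yields \eqref{commutativecase}.

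**Main obstacle.** The delicate point is not the computation but the verification that the hypotheses of Theorem \ref{sumofgnspifg1} genuinely apply: namely that $\rho$ (or $\ov\rho$) is biclosed, that $\eta^{1/2}$ really lies in $\cD(\rho)$ and represents $f_\eta$ there (handled above via $a^+a\in A\cap L^1$ and $\eta\in L^\infty$), and above all that $(\rho(A)^\prime_{ss})^\prime$ coincides with $L^\infty(X,\mu)$ as a von Neumann algebra acting on $L^2(X,\mu)$ — this uses that $A$ separates points and contains $1$, so $\rho(A)$ is weakly dense in the maximal abelian multiplication algebra, whence $\rho(A)^\prime_{ss}=L^\infty(X,\mu)$ and its commutant is itself. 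One must be slightly careful that the symmetrized commutant (defined via closures of the $\rho(a)$) equals the ordinary commutant here; this is standard for multiplication operators. Once $\cM=L^\infty(X,\mu)$ is pinned down, everything else is routine.
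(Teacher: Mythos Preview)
Your approach is essentially the paper's: apply Theorem \ref{sumofgnspifg1} to the multiplication representation $\pi$ on $L^2(X,\mu)$ with representing vectors $\eta^{1/2},\xi^{1/2}$, identify $\cM=(\pi(A)'_{ss})'=L^\infty(X,\mu)$, and then invoke the known commutative formula for $P_\cM$. The one place the paper is sharper is exactly your flagged ``main obstacle'': since $\pi(A)$ consists of unbounded operators, the phrase ``$\rho(A)$ is weakly dense in $L^\infty$'' is not literally meaningful, and the paper instead passes to the bounded $*$-algebra $\gA$ generated by the resolvents $(a\pm\ii)^{-1}$ for $a=a^+\in A$, shows $\gA$ separates points of $X$, applies Stone--Weierstrass on the one-point compactification to get norm density in $C_0(X)$, and concludes $\pi(A)'_{ss}=\gA'=L^\infty(X,\mu)$. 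Your worry about biclosedness is harmless: the domain $\cD(\pi)$ you wrote down is the intersection of the maximal domains of all multiplication operators $\pi(a)$, so $\pi$ is in fact self-adjoint (hence biclosed).
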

\begin{proof}
We define a closed $*$-representation $\pi$ of the $*$-algebra $A$ on $L^2(X,\mu)$ by 
$\pi(a)\varphi=a\cdot \varphi$ for $a\in A$ and $\varphi$ in the domain $$\cD(\pi):=\{\varphi \in L^2(X,\mu):a\cdot \varphi\in L^2(X,\mu) ~{\rm for}~ a\in A\}.$$

 First we prove that $\pi(A)^\prime_{ss}=L^\infty (X,\mu)$, where the functions of\, $L^\infty (X,\mu)$ act as multiplication operators on\, $L^2 (X,\mu).$ Let $\gA$ denote the $*$-subalgebra of $L^\infty (X,\mu)$ generated by the functions $(a\pm \ii)^{-1}$, where $a=a^+\in A$. Obviously, $L^\infty (X,\mu)\subseteq \pi(A)^\prime_{ss}$. 
 Conversely, let $x\in \pi(A)^\prime_{ss}$. It is straightforward to show that for any $a=a^+\in A$ the operator\, $\ov{\pi(a)}$\, is self-adjoint and hence equal to the (self-adjoint) multiplication operator by the function $a$. By definition $x$ commutes with\, $\ov{\pi(a)}$\,, hence with\, $(\ov{\pi(a)}\pm \ii I)^{-1}=(a\pm \ii)^{-1}$, and therefore with the whole algebra $\gA$. The $*$-algebra $A$ separates the points of $X$, so does the $*$-algebra $\gA$. Therefore, from the Stone--Weierstrass theorem \cite[Corollary 8.2]{conway90}, applied to the one point compactification of $X$, it follows that $\gA$ is norm dense in $C_0(X)$. Hence $x$ commutes with $C_0(X)$ and so with its closure \, $L^\infty (X,\mu)$\, in the weak operator topology. Thus, $x\in L^\infty (X,\mu)^\prime$. Since $ L^\infty (X,\mu)^\prime= L^\infty (X,\mu)$, we have shown that $\pi(A)^\prime_{ss}=L^\infty (X,\mu)$. Therefore, $\cM:=(\pi(A)^\prime_{ss})^\prime=L^\infty (X,\mu)$.
 
 Let $F_\eta$ and $F_\xi$ denote the positive linear functionals on $\cM$ defined by (\ref{fetaxi}) with $A$ replaced by $\cM$. 
  For $\cM=L^\infty (X,\mu)$ it is well-known (see e.g. formula (14) in \cite{alberti83}) that $P_\cM(F_\eta,F_\xi)= (\,\int_X\, \eta(x)^{1/2}\xi(x)^{1/2}\, d\mu(x) \,)^2. $ Since $P_A(f_\eta,f_\xi)=P_\cM(F_\eta,F_\xi)$ by Theorem \ref{sumofgnspifg1}, we obtain (\ref{commutativecase}).
  \end{proof}

In the following two examples we reconsider the one dimensional Hamburger moment problem (see Example \ref{hambmp}) and we specialize the preceding theorem to the case where $X=\dR$ and $A=\dC[x]$.
\begin{thex}\label{detmpform} {\it  Determinate Hamburger moment problems}\\
Let $\mu_\eta$ and $\mu_\xi$ be the positive Borel measures on $\dR$ defined by  $d\mu_\eta=\eta d\mu$ and $d\mu_\xi =\xi d\mu$. Since $\dC[x]\in L^1(\dR,\mu)$ and $\eta, \xi\in L^\infty(\dR,\mu)$, we have $\mu_\eta,\mu_\xi\in M(\dR)$.  If both measures $\mu_\eta$ and $\mu_\xi$ are determinate, then the GNS representations $\pi_{f_{\mu_\eta}}$ and $\pi_{f_{\mu_\xi}}$ are essentially self-adjoint (as shown in Example \ref{hambmp}) and hence formula (\ref{commutativecase}) holds by Theorem \ref{commtrans}.
\end{thex}

\begin{thex}\label{indetr} {\it {Indeterminate Hamburger  moment problems}}\\
Suppose  $\nu\in M(\dR)$ is an indeterminate measure such that $\nu(\dR)=1$. 

Let $V_\nu$ denote  the set of all positive Borel measures $\mu\in M(\dR)$ which have the same moments as $\nu$,  that is, $\int x^n d\nu(x)=\int x^n d\mu(x)$ for all $n\in \dN_0$. Since $\nu$ is indeterminate and $V_\nu$ is  convex and weakly compact, there exists a measure $\mu\in V_\nu$ which is not an extreme point of $V_\nu$, that is, there are measures $\mu_1,\mu_2\in V_\nu$, $\mu_j\neq \mu$ for $j=1,2$, such that $\mu=\frac{1}{2}(\mu_1+\mu_2)$. Since $\mu_j(M)\leq 2\mu(M)$ for all measurable sets $M$ and $\mu_1+\mu_2=2\mu$, there exists  functions $\eta, \xi \in L^\infty(\dR,\nu)$ satisfying 
\begin{align}\label{propetaxi} 
\eta(x)+\xi(x)=2 ,~~  \|\xi\|_\infty \leq 2,~~ \|\eta\|_\infty \leq 2,~~  d\mu_1=\eta d\mu,~~ d\mu_2=\xi d\mu.
\end{align}
 
Define $f(p)=\int p(x)d\mu(x)$ for $p\in \dC[x]$. Since $\mu_1,\mu_2,\mu\in V_\nu$, the functionals $f_\eta$ and $f_\xi$ defined by (\ref{fetaxi}) are equal to $f$. Therefore, since $f(1)=\mu(\dR)=\nu(\dR)=1$, we have $P_A(f_\eta,f_\xi)=P_A(f,f)=1$.  
 
Put $J:=(\,\int_X\, \eta(x)^{1/2}\xi(x)^{1/2}\, d\mu(x) )^2.$ From (\ref{propetaxi}) we obtain $\eta(x)\xi(x)=\eta(x)(2-\eta(x))\leq 1$ and hence  $J\leq 1$, since $\mu(\dR)=1$. If $J$ would be equal to $1$, then $\eta(x)(2-\eta(x))=1$ $\mu$-a.e. on $\dR$  which  implies that $\eta(x)=1$\, $\mu$-a.e. on $\dR$ by (\ref{propetaxi}). But then $\mu_1=\mu_2=\mu$ which contradicts the choice of measures $\mu_1$ and $\mu_2$. Thus we have proved that $J\neq 1=P_A(f_\eta,f_\xi)$, that is, formula (\ref{commutativecase}) does not hold in this case.
\end{thex}
The classical moment problem leads to a number of open problems concerning  transition probabilities. We will state  three of them. 

Let  $M(\dR^d)$, $d\in \dN$, denote the set of positive Borel measures $\mu$ on $\dR^d$ such that all polynomials $p(x_1,\dots,x_d)\in \dC[x_1,\dots,x_d]$ are $\mu$-integrable. For $\mu \in M(\dR^d)$ we define a positive linear functional $g_\mu$ on the $*$-algebra $A:=\dC[x_1,\dots,x_d]$ by 
$$g_\mu(p)=\int p\, d\mu,\quad p \in \dC[x_1,\dots,x_d].$$
Then the main problem is the following:\\
{\it Problem 1: Given $\mu,\nu \in M(\dR^d)$, what is\, $P_A(g_\mu,g_\nu)$ ?}

This seems to be a difficult  problem and it is hard to  expect   a sufficiently complete answer.
For $d=1$ Example \ref{detmpform} contains  some answer under the assumption that both  measures $\mu_\eta$ and $\mu_\xi$ are determinate. This suggests the following  questions:\\
{\it Problem 2: What about the case  when the measures $\mu_\eta$ and/or $\mu_\xi$ in Example \ref{detmpform} are not determinate?}\\
{\it Problem 3: Is  formula (\ref{commutativecase}) 
still valid in the multi-dimensional case $d>1$ if  $\mu_\eta$ and $\mu_\xi$ are determinate ?}

It can be shown that the answer to problem 3 is affirmative if all multiplication operators $\pi_\mu(x_j)$, $j=1,\dots,d,$ are essentially self-adjoint. The latter assumption is sufficient, but not neccessary for $\mu$ being determinate \cite{ps}. 
 In the multi-dimensional case determinacy turns out to be much more difficult than in the  one-dimensional case, see e.g. \cite{ps}. 

\section{Vector Functionals of the Schr\"odinger Representation}\label{vectorweyl}
The crucial assumption for the  results in preceding sections was the essential self-adjointness of GNS representations $\pi_f$ and $\pi_g$. In this section we consider the simplest situation where $\pi_f$ and $\pi_g$ are not essentially self-adjoint.

In this section  $A$ denotes the Weyl algebra (see Example \ref{schroding}) and   $\pi$ is the Schr\"odinger representation of $A$ given by (\ref{schrorep}). For $\eta \in \cD(\pi)=\cS(\dR)$ let $f_\eta$ denote the positive linear functional $f_\eta$  on $A$ given by $$f_\eta(x)=\langle \pi(x)\eta,\eta\rangle,\quad  x\in A.$$ Consider the following condition on the function $\eta$:

\smallskip
$(*)$ {\it There are finitely many mutually disjoint open intervals $J_l(\eta)=(\alpha_l,\beta_l)$, $l=1,\dots,r$,  such that $\eta(t)\neq 0$ for  $t\in J(\eta):=\cup_l\, J_l(\eta) $ and $\eta^{(n)}(t)=0$ for  $t\in \dR/ J(\eta)$ and all $n\in \dN_0$.}

\smallskip
The main result of this section is the following theorem.
\begin{tht}\label{weylstates}
Suppose that $\varphi$ and $\psi$ are functions of $C_0^\infty(\dR)$ satisfying condition $(*)$. Then 
\begin{align}\label{vectorstates}
P_A(f_\varphi,f_\psi)= \bigg(\sum_{k,l} \bigg|\int_{\cJ_k(\varphi)\cap\cJ_l(\psi)}\, \varphi(x) \ov{\psi(x)}\, dx \bigg|\bigg)^2.
\end{align}
(If ~  $\cJ_k(\varphi)\cap\cJ_l(\psi)$~ is empty, the corresponding integral is set zero.) 
\end{tht}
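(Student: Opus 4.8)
The plan is to reduce the computation to the formula \eqref{intertwtnull}, i.e.\ to identify the intertwiner space $I(\pi_{f_\varphi},(\pi_{f_\psi})^*)$ and the relevant matrix elements. First I would describe the GNS representation $\pi_{f_\eta}$ concretely. Since $\eta$ satisfies $(*)$, the cyclic subspace $\ov{\pi(A)\eta}$ splits as an orthogonal sum over the connected components $J_l(\eta)$: on each interval $J_l=(\alpha_l,\beta_l)$ the vectors $\pi(a)\eta$ restricted to $J_l$ span (a dense subspace of) $L^2(J_l)$, because $\varphi|_{J_l}\in C_0^\infty(J_l)$ is nowhere zero on $J_l$ and multiplication by polynomials together with differentiation generate a dense set. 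Crucially, because $\eta$ and all its derivatives vanish outside $J(\eta)$, the operator $\pi_{f_\eta}(p)$ is (unitarily equivalent to) multiplication by $x$ and $-\ii\frac{d}{dx}$ acting on $\cD(\pi_{f_\eta})\subseteq \bigoplus_l L^2(J_l)$ with \emph{no} boundary conditions coupling the endpoints; hence $\pi_{f_\eta}$ is \emph{not} essentially self-adjoint (the momentum operator on a bounded interval with the minimal domain has deficiency indices $(1,1)$ on each component), which is consistent with the hypothesis being outside the scope of Theorems~\ref{sumofgnspifg}--\ref{commtrans}. I would record the unitary $U_\eta:\Hh(\pi_{f_\eta})\to \bigoplus_l L^2(J_l(\eta))$ sending the cyclic vector $\varphi_{f_\eta}$ to $(\eta|_{J_l})_l$.

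Next I would compute the intertwiner space. A bounded operator $T\in I(\pi_{f_\varphi},(\pi_{f_\psi})^*)$ must intertwine multiplication by $x$ and by $-\ii\frac{d}{dx}$; under the identifications above, $T$ becomes a bounded operator $\bigoplus_k L^2(J_k(\varphi))\to \bigoplus_l L^2(J_l(\psi))$ commuting with multiplication by $x$ and with differentiation on the relevant domains. Commuting with multiplication by the coordinate forces $T$ to be ``local'', i.e.\ block-decomposable with the $(l,k)$ block supported on $J_k(\varphi)\cap J_l(\psi)$ and acting as multiplication by some $L^\infty$ function $g_{lk}$ on that intersection; commuting additionally with $\frac{d}{dx}$ forces $g_{lk}$ to be \emph{constant} on each connected component of $J_k(\varphi)\cap J_l(\psi)$. (Here one must be careful about domains and about the target being $(\pi_{f_\psi})^*$ rather than a closed representation, but the intertwining relation \eqref{definterwiner} only needs to hold on $\cD(\pi_{f_\varphi})$, and the boundary terms from integration by parts all vanish since the vectors $\pi(a)\varphi$ and their derivatives vanish at the endpoints of each $J_k(\varphi)$.) So $I(\pi_{f_\varphi},(\pi_{f_\psi})^*)$ is parametrized by families of constants $(c_m)$, one per connected component $C_m$ of $\bigcup_{k,l}\bigl(J_k(\varphi)\cap J_l(\psi)\bigr)$, with $\|T\|=\sup_m |c_m|$; thus $\|T\|\le 1$ iff $|c_m|\le1$ for all $m$.

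Finally I would evaluate the supremum in \eqref{intertwtnull}. With $\varphi_{f_\varphi}\mapsto(\varphi|_{J_k})_k$ and $\varphi_{f_\psi}\mapsto(\psi|_{J_l})_l$, the matrix element is
\begin{align*}
\langle T\varphi_{f_\varphi},\varphi_{f_\psi}\rangle=\sum_m c_m\int_{C_m}\varphi(x)\ov{\psi(x)}\,dx,
\end{align*}
and the supremum of $|\cdot|$ over $|c_m|\le1$ is attained by choosing $c_m$ to be the unimodular phase aligning each term, giving $\sum_m\bigl|\int_{C_m}\varphi\ov{\psi}\bigr|$. Since each $J_k(\varphi)\cap J_l(\psi)$ is itself an interval (intersection of two intervals), it has at most one connected component, so the sum over components $m$ is exactly the double sum over pairs $(k,l)$ in \eqref{vectorstates}, and squaring gives $P_A(f_\varphi,f_\psi)$ by \eqref{intertwtnull}. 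The main obstacle I anticipate is the second step: carefully justifying that a bounded operator intertwining the unbounded operators $\pi_{f_\varphi}(q)$ and $\pi_{f_\psi}(q)^*$, together with the momentum operators, must be locally constant multiplication — i.e.\ controlling domains, verifying the vanishing of all boundary contributions under integration by parts (which is where condition $(*)$ is essential), and ruling out any ``nonlocal'' intertwiners that could arise because $(\pi_{f_\psi})^*$ has a larger domain than $\ov{\pi}_{f_\psi}$. Establishing this rigidity is the heart of the argument; the rest is the routine optimization above.
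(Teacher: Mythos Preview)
Your proposal is correct and follows essentially the same route as the paper: both reduce to the intertwiner formula \eqref{intertwtnull}, identify the GNS representation of $f_\eta$ with the restriction of the Schr\"odinger representation to $\bigoplus_l L^2(J_l(\eta))$, show that any $T\in I(\pi_{f_\varphi},(\pi_{f_\psi})^*)$ is multiplication by a function that is constant on each $J_k(\varphi)\cap J_l(\psi)$ (first using the intertwining with $q$ to force $T$ to be a multiplication operator, then the intertwining with $p$ to force the multiplier to be locally constant), and finally optimize the phases. The paper's additional contribution beyond your sketch is precisely the technical content you flag as the ``main obstacle'': Lemma~\ref{gnseta} (supported by Lemmas~\ref{denselemma1} and~\ref{denselemma2}) rigorously identifies $\ov{\pi}_{f_\eta}$ with $\rho_\eta$, and the proof gives an explicit description of $\cD((\pi_\psi)^*)$ (all $C^\infty$ functions on each $\ov{J_l(\psi)}$ with no boundary conditions), which is needed for the converse direction where one must verify that the candidate vector $\xi=\lambda_{kl}\varphi$ actually lies in $\cD((\pi_\psi)^*)$.
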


Before we turn to the proof of the theorem let us discuss formula (\ref{vectorstates}) in two simple cases. 

$\bullet$ If both sets $\cJ(\varphi)$ and $\cJ(\psi)$ consist of a  single interval, then  
$$P_A(f_\varphi,f_\psi)= \bigg|\int_\dR\, \varphi(x) \ov{\psi(x)}\, dx \bigg|^2=|\langle \varphi,\psi\rangle|^2,$$ 
that is, in this case formula (\ref{fsftvector}) holds. 

$\bullet$ Let $\varphi, \psi\in C_0^\infty(\dR) $ be such that $\cJ(\varphi)=\cJ(\psi)$,  $\cJ_k(\varphi)=\cJ_k(\psi)$  and $\varphi(x)=\epsilon_k\psi(x)$ on $\cJ_k(\varphi)$ for $k=1,\cdots,r$, where $\epsilon_k\in \{1,-1\}$. Then formula (\ref{vectorstates}) yields\, $P_A(f_\varphi,f_\psi)=\|\varphi\|^4$.\, It is easy to choose  $\varphi\neq 0$ and the numbers $\epsilon_k$ such that $\langle \varphi,\psi\rangle=0$, so  formula (\ref{fsftvector}) does not hold in this case.

\medskip
The proof of Theorem \ref{weylstates} requires a number of technical preparations. The  first  aim is to desribe the closure $\overline{\pi}_{f_\eta}$ of the GNS representation $\pi_{f_\eta}$ for a function $ \eta\in C_0^\infty(\dR)$ satisfying  condition $(*)$. 
 
Let $\rho_\eta$ denote the restriction of $\pi$ to the dense domain 
\begin{align*}
\cD(\rho_\eta)=
\{ \xi\in \bigoplus_{l=1}^r C^\infty((\alpha_l,\beta_l)): \xi^{(k)}(\alpha_l)=\xi^{(k)}(\beta_l)=0,\, 
k\in \dN_0,\, l=1,\dots,r \}
\end{align*}
in the Hilbert space $L^2(\cJ(\eta))$. The following lemma  says that\, $\rho_\eta$\, is unitarily equivalent to $\overline{\pi}_{f_\eta}$.

\begin{thl}\label{gnseta} There is a unitary operator $U$ of\, $\Hh(\pi_{f_\eta})$\, onto $L^2(\cJ(\eta))$ given by 
$U(\pi_{f_\eta}(a)\eta)= \rho_\eta(a)\eta$,  $a\in A$, such that\,  $\rho_\eta=U\ov{\pi}_{f_\eta} U^\ast$.
\end{thl}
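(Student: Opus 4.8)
The plan is to verify directly that the map $U\colon \pi_{f_\eta}(a)\eta \mapsto \rho_\eta(a)\eta$ is a well-defined isometry with dense range, and then to upgrade this to the statement about closures. First I would check well-definedness and isometry on the algebraic level: for $a,b\in A$ one has $\langle \rho_\eta(a)\eta,\rho_\eta(b)\eta\rangle_{L^2(\cJ(\eta))} = \langle \pi(b^+a)\eta,\eta\rangle_{L^2(\dR)}$, since $\eta$ (hence $\pi(c)\eta$ for any $c\in A$) is supported in $\overline{\cJ(\eta)}$ and, because $\eta$ satisfies $(*)$, all derivatives of $\pi(c)\eta$ vanish outside $\cJ(\eta)$, so restricting to $\cJ(\eta)$ loses no $L^2$-mass and the differential operator $\rho(p)$ agrees with $\pi(p)$ there; this common value equals $f_\eta(b^+a) = \langle\pi_{f_\eta}(b^+a)\varphi_{f_\eta},\varphi_{f_\eta}\rangle = \langle \pi_{f_\eta}(a)\varphi_{f_\eta},\pi_{f_\eta}(b)\varphi_{f_\eta}\rangle$. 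Here I am using Proposition \ref{gnsprop} to identify $\pi_{f_\eta}$ with the cyclic GNS data attached to $f_\eta$, with cyclic vector which I also call $\eta$ after the identification. Thus $U$ is a well-defined linear isometry from $\cD(\pi_{f_\eta}) = \pi_{f_\eta}(A)\eta$ onto $\rho_\eta(A)\eta = \cD(\rho_\eta)$, intertwining $\pi_{f_\eta}$ and $\rho_\eta$ in the sense $U\pi_{f_\eta}(a) = \rho_\eta(a)U$ on $\cD(\pi_{f_\eta})$.

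Next I would extend $U$ by continuity to a unitary of Hilbert spaces. The isometry $U$ has dense range because $\cD(\rho_\eta)$ is dense in $L^2(\cJ(\eta))$ (it contains $\bigoplus_l C_c^\infty((\alpha_l,\beta_l))$), so the closure of $U$ is a unitary operator $\Hh(\pi_{f_\eta})\to L^2(\cJ(\eta))$, which I still denote $U$. The remaining point is the identification of representations at the level of closures: I must show $\rho_\eta = U\,\ov{\pi}_{f_\eta}\,U^*$. One direction is immediate: conjugating $\ov{\pi}_{f_\eta}$ by the unitary $U$ gives a closed $*$-representation $U\ov{\pi}_{f_\eta}U^*$ on $L^2(\cJ(\eta))$ which extends $\rho_\eta$ (since $U$ intertwines $\pi_{f_\eta}$ with $\rho_\eta$ on the algebraic cyclic domains, and closures are preserved under unitary conjugation, using $\ov{U\pi U^*} = U\ov{\pi}U^*$). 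So it remains to show the reverse inclusion, i.e. that $\rho_\eta$ is already closed, equivalently that $U\ov{\pi}_{f_\eta}U^*$ has no domain strictly larger than $\cD(\rho_\eta)$; equivalently, $\cD(\rho_\eta)$ is complete in the graph topology $\gt_{\rho_\eta(A)}$.

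The main obstacle, then, is proving that $\rho_\eta$ is closed, i.e. that the domain $\cD(\rho_\eta)$ defined by the vanishing of all derivatives at all endpoints $\alpha_l,\beta_l$ is already graph-complete for the Weyl-algebra action generated by multiplication by $x$ and $-\ii\,d/dx$. I expect this to be handled as follows: on each interval $(\alpha_l,\beta_l)$ the graph topology of the Weyl-algebra action dominates all the Sobolev norms $\|\cdot\|_{W^{k,2}}$ with polynomial weights, so a Cauchy sequence in $\cD(\rho_\eta)[\gt_{\rho_\eta(A)}]$ converges in $\bigcap_k W^{k,2}_{loc}$ on each component and hence (together with the weighted $L^2$ control near $\pm\infty$, which is vacuous here since the intervals are bounded) to a function in $\bigoplus_l C^\infty((\alpha_l,\beta_l))$; the boundary conditions $\xi^{(k)}(\alpha_l) = \xi^{(k)}(\beta_l) = 0$ pass to the limit because on a bounded interval each $\xi^{(k)}$ and its derivatives converge uniformly up to the endpoints, by Sobolev embedding of $W^{m,2}((\alpha_l,\beta_l))$ into $C^{m-1}([\alpha_l,\beta_l])$. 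Hence the limit lies in $\cD(\rho_\eta)$, so $\rho_\eta$ is closed. I would also double-check that $\cD(\rho_\eta)$ is invariant under $\rho_\eta(A)$ — clear, since multiplication by a polynomial and differentiation both preserve smoothness up to the boundary and the vanishing of all derivatives at the endpoints — and that it is dense in $L^2(\cJ(\eta))$, already noted. Combining the two inclusions gives $\rho_\eta = U\ov{\pi}_{f_\eta}U^*$, which is the assertion of the lemma.
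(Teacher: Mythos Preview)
Your argument has a genuine gap, stemming from the erroneous identification ``$\rho_\eta(A)\eta = \cD(\rho_\eta)$'' in the first paragraph. These two sets are \emph{not} equal: $\rho_\eta(A)\eta=\pi(A)\eta$ is the (countable-dimensional) linear span of the functions $x^m\eta^{(n)}(x)$, whereas $\cD(\rho_\eta)$, as defined just before the lemma, is the much larger space of all smooth functions on $\cJ(\eta)$ with all derivatives vanishing at the endpoints. Consequently the unitary $U$ only intertwines $\pi_{f_\eta}$ with the \emph{cyclic subrepresentation} $\tau_\eta:=\rho_\eta\lceil\pi(A)\eta$, so that $U\,\ov{\pi}_{f_\eta}\,U^*=\ov{\tau}_\eta$. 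Your claim that ``$U\ov{\pi}_{f_\eta}U^*$ extends $\rho_\eta$'' therefore amounts to the assertion $\rho_\eta\subseteq\ov{\tau}_\eta$, i.e.\ that $\pi(A)\eta$ is dense in $\cD(\rho_\eta)$ for the graph topology $\gt_{\rho_\eta(A)}$; the justification you give (``closures are preserved under unitary conjugation'') does not address this at all.

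What you \emph{do} prove --- that $\rho_\eta$ is closed, via Sobolev embedding --- yields only the opposite inclusion $\ov{\tau}_\eta\subseteq\ov{\rho}_\eta=\rho_\eta$, which is the easy direction (and which the paper dismisses as ``$\rho_\eta$ is obviously closed''). The substantive content of the lemma is precisely the graph-density of $\pi(A)\eta$ in $\cD(\rho_\eta)$, and this is what the paper devotes its effort to: it first observes that, since the intervals are bounded, the graph topology is generated by the seminorms $\|\rho_\eta(p)^n\cdot\|$ alone; then, for $\psi\in\cD(\rho_\eta)$ vanishing near the endpoints, it approximates $\psi\eta^{-1}$ (which is now $C^m$ across the whole interval $[\alpha,\beta]$) by polynomials $f_n$ together with all derivatives up to order $m$ (Lemmas~\ref{denselemma1} and~\ref{denselemma2}), so that $f_n\eta\to\psi$ in the graph topology; finally a rescaling argument reduces a general $\psi\in\cD(\rho_\eta)$ to this compactly supported case. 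None of this is present in your proposal, and without it the proof is incomplete.
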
 
\begin{proof}
From the properties of GNS representations it follows easily that the unitary  operator $U$ defined by $U(\pi_{f_\eta}(a)\eta)= \rho_\eta(a)\eta$,  $a\in A$, provides unitary equivalences $ \tau_\eta=U\pi_{f_\eta} U^\ast$ and $\ov{\tau}_\eta=U\ov{\pi}_{f_\eta} U^\ast$, where $\tau_\eta$ denotes the restriction of $\pi$ to  $\cD(\rho_\eta)= \pi(A)\eta$. Clearly, $\tau_\eta\subseteq \rho_\eta$ and  hence $\ov{\tau}_\eta\subseteq \rho_\eta$, since $\rho_\eta$ is obviously closed. To  prove  the statement it therefore suffices to show that $\rho_\eta$ is the closure of $\tau_\eta$, that is, $\pi(A)\eta$ is dense in $\cD(\rho_\eta)$ in the graph topology of\,  $\rho_\eta(A)$. For this  the auxiliary Lemmas \ref{denselemma1} and \ref{denselemma2} proved below are essentially used.

Each element $a\in A$ is of a finite sum of terms $f(q)p^n$, where $n\in \dN_0$ and $f\in \dC[q]$. Since $ \eta\in C_0^\infty(\dR)$, the set\, $\cJ(\eta)$ and hence the operators $\pi_0(f(q))$ are bounded. Therefore, the graph topology $\gt_{\rho_\eta(A)}$ is generated by the seminorms $\|\rho_\eta(p)^n \cdot\|$, $n\in \dN_0$, on $ \cD(\rho_\eta)$. Let $\psi\in \cD(\rho_\eta)$.

First assume that $\psi$ vanishes in some
neighbourhoods of the end points $\alpha_l, \beta_l$. Then, by Lemma \ref{denselemma2}, for any $m\in\dN$ there is sequence\, $(f_n)_{n\in \dN}$\, of polynomials such that $$\lim_n~ \rho_\eta((\ii p)^k)(\rho_\eta( f(q))\eta-\psi)=\lim_n~((f_n\eta)^{(k)} -\psi^{(k)})=0 $$ in $L^2(\cJ(\eta))$  for $k=0,\dots,m$. 
This shows that $\psi$ is in the closure of  $\rho_\eta(A)\eta$ with respect the graph topology of $\rho_\eta(A)$.

The case of a general function $\psi$ is reduced to the preceding case as follows. Suppose that $\varepsilon>0$ and~ $2\varepsilon <\min_l\, |\beta_l-\alpha_l|$. We define
\begin{align*}
\psi_\varepsilon(x)=\psi(x-\varepsilon+2\varepsilon(x-\alpha_l-\varepsilon)(\beta_l-\alpha_l-2\varepsilon)^{-1})\quad{\rm for}\quad x\in (\alpha_l,\beta_l)
\end{align*}
and $l=1,\dots,r$ and $\psi_\varepsilon(x)=0$ otherwise. Then $\psi_\varepsilon$ vanishes in some neighbourhoods of the end points\, $\alpha_l, \beta_l$, so  $\psi_\varepsilon$ is in the closure of $\rho_\eta(A)\eta$ as shown in  the preceding paragraph. Using the  dominated Lebesgue convergence theorem it follows that 
$$\lim_{\varepsilon\to +0}\rho_\eta((\ii p)^k)(\psi_\varepsilon-\psi)= \lim_{\varepsilon\to +0} (\psi_\varepsilon^{(k)}-\psi ^{(k)})=0$$ in\, $L^2(\cJ(\eta))$  for $k\in \dN_0$. Therefore, since  $\psi_\varepsilon$ is in the closure of $\rho_\eta(A)\eta$, so is $\psi$.
\end{proof} 
\begin{thl}\label{denselemma1}
Suppose that\, $g\in C^{(k)}([\alpha,\beta])$,\, where $\alpha,\beta \in \dR$ and $k\in \dN$. Then there exists a sequence $(f_n)_{n\in \dN}$ of polynomials such that\, $f^{(j)}_n(x)\Longrightarrow g^{(j)}(x)$\, uniformly on\, $[\alpha,\beta]$\, for $j=0,\dots,k$  as\, $n\to \infty$. 
\end{thl}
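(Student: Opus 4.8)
The plan is to reduce the assertion to the classical Weierstrass approximation theorem applied only to the top-order derivative $g^{(k)}$, and then to recover the lower derivatives by repeated integration with the correct constants of integration. Since $g\in C^{(k)}([\alpha,\beta])$, the function $g^{(k)}$ is continuous on $[\alpha,\beta]$, so by the Weierstrass theorem there is a sequence $(p_n)_{n\in\dN}$ of polynomials with $\sup_{x\in[\alpha,\beta]}|p_n(x)-g^{(k)}(x)|\to 0$.

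Next I would define the candidate polynomials by prescribing simultaneously the $k$-th derivative $p_n$ and the first $k$ Taylor coefficients of $g$ at the point $\alpha$:
\[
f_n(x):=\sum_{j=0}^{k-1}\frac{g^{(j)}(\alpha)}{j!}\,(x-\alpha)^j+\int_\alpha^x\frac{(x-t)^{k-1}}{(k-1)!}\,p_n(t)\,dt .
\]
Each $f_n$ is indeed a polynomial: expanding $(x-t)^{k-1}$ by the binomial theorem and integrating each monomial $t^m$ against it produces a polynomial in $x$. By construction $f_n^{(k)}=p_n$ and $f_n^{(j)}(\alpha)=g^{(j)}(\alpha)$ for $j=0,\dots,k-1$. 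Hence the difference $h_n:=f_n-g$ lies in $C^{(k)}([\alpha,\beta])$, satisfies $h_n^{(k)}=p_n-g^{(k)}$, and vanishes to order $k$ at $\alpha$, i.e. $h_n^{(j)}(\alpha)=0$ for $j=0,\dots,k-1$.

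Finally I would apply Taylor's formula with integral remainder to $h_n$ about $\alpha$: for $0\le j\le k-1$,
\[
h_n^{(j)}(x)=\int_\alpha^x\frac{(x-t)^{k-1-j}}{(k-1-j)!}\,h_n^{(k)}(t)\,dt=\int_\alpha^x\frac{(x-t)^{k-1-j}}{(k-1-j)!}\bigl(p_n(t)-g^{(k)}(t)\bigr)\,dt ,
\]
while $h_n^{(k)}(x)=p_n(x)-g^{(k)}(x)$. In every case the right-hand side is bounded in absolute value by $\frac{(\beta-\alpha)^{k-j}}{(k-j)!}\,\sup_{[\alpha,\beta]}|p_n-g^{(k)}|$ (with the prefactor read as $1$ when $j=k$), which tends to $0$. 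Therefore $f_n^{(j)}(x)\Longrightarrow g^{(j)}(x)$ uniformly on $[\alpha,\beta]$ for $j=0,\dots,k$, as claimed. There is essentially no hard step in this argument; the only point meriting a word of care is the verification that the integral term defining $f_n$ is genuinely polynomial, which the binomial expansion settles at once.
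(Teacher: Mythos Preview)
Your proof is correct and follows essentially the same strategy as the paper: apply Weierstrass to the top derivative $g^{(k)}$ and then integrate down to recover the lower orders with the right constants. The only difference is packaging---the paper carries out the integration one step at a time by induction, whereas you do all $k$ integrations at once via the Taylor integral remainder; the underlying argument is the same.
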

\begin{proof}
By the Weierstrass theorem there is a sequence $(h_n)_{n\in \dN}$ of polynomials such that $h_n(x) \Longrightarrow g^{(k)}(x)$ uniformly on $[\alpha,\beta]$. Fix $\gamma\in [\alpha,\beta]$ and set\, $h_{n,k}:=h_n$. Then $$h_{n,k-1}(x):=g^{(k)}(\gamma)+ \int^x_\gamma h_{n,k}(s)\, ds \Longrightarrow  g^{(k-1)}(x)=g^{(k)}(\gamma)+\int^x_\gamma g^{(k)}(s)\, ds. $$
 Clearly,  $(h_{n,k-1})_{n\in \dN}$ is sequence of polynomials and we have $h_{n,k-1}^\prime(x)=h_{n,k}(x)$ on $[\alpha,\beta]$. Proceeding by induction we obtain sequences $(h_{n,k-j})_{n\in \dN}$, $j=0,\cdots,k$,  of polynomials such that\, $h_{n,k-j}(x)\Longrightarrow g^{(k-j)}(x)$\,  and\, $h_{n,k-j}^\prime(x)=h_{n,k+1-j}(x)$\, on $[\alpha,\beta]$. Setting\, $f_n:=h_{n,0}$\, the sequence\, $(f_n)_{n\in \dN}$\, has the desired properties.
\end{proof}

\begin{thl}\label{denselemma2}
Suppose that $\eta\in C_0^\infty (\dR)$ satisfies condition $(*)$. Let  $\psi\in \bigoplus_{l=1}^r C^{(m)}_0((\alpha_l,\beta_l))$, where $m\in \dN$. 
Then there exists a sequence $(f_n)_{n\in \dN}$ of polynomials such that~ $\lim\nolimits_{n\to \infty}\,(f_n\eta)^{(k)}= \psi^{(k)}$\, in $L^2(\cJ(\eta))$  for $k=0,\dots,m$. 
\end{thl}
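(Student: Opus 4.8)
The plan is to work on each interval $J_l(\eta)=(\alpha_l,\beta_l)$ separately, since $\psi$ is a direct sum of functions $\psi_l\in C_0^{(m)}((\alpha_l,\beta_l))$ and, by condition $(*)$, $\eta$ restricted to $[\alpha_l,\beta_l]$ is a $C^\infty$ function that is nonzero on the open interval and vanishes to infinite order at both endpoints. The key observation is that on the support of $\psi_l$ — a compact subset $K_l$ of the open interval $(\alpha_l,\beta_l)$ — the function $\eta$ is bounded below in absolute value, so $1/\eta$ is a well-defined $C^{(m)}$ (in fact $C^\infty$) function on a slightly larger compact interval $[\alpha_l',\beta_l']\subseteq(\alpha_l,\beta_l)$ containing $K_l$ in its interior. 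Set $g_l:=\psi_l/\eta$ on $[\alpha_l',\beta_l']$, extended by $0$ to all of $[\alpha_l,\beta_l]$; by the support condition on $\psi_l$ this extension is still $C^{(m)}$ on the whole closed interval $[\alpha_l,\beta_l]$. Then $g_l\eta=\psi_l$ identically, and $g_l\in C^{(m)}([\alpha_l,\beta_l])$.

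Next I would apply Lemma \ref{denselemma1} to $g_l$ on each $[\alpha_l,\beta_l]$: there is a sequence of polynomials $(f_{n,l})_{n\in\dN}$ with $f_{n,l}^{(j)}\Longrightarrow g_l^{(j)}$ uniformly on $[\alpha_l,\beta_l]$ for $j=0,\dots,m$. To get a single polynomial sequence, use that $A\subseteq C(X)$ separates points — or more elementarily, a polynomial interpolation/patching argument: since the intervals $[\alpha_l,\beta_l]$ are mutually disjoint compact sets, there is a single sequence of polynomials $(f_n)$ with $f_n^{(j)}\Longrightarrow g_l^{(j)}$ uniformly on each $[\alpha_l,\beta_l]$ for all $l$ and $j\le m$ simultaneously (apply Lemma \ref{denselemma1} on the convex hull after linearly interpolating the $g_l$ between the intervals, or simply take a polynomial that is uniformly close to a fixed common $C^{(m)}$ extension $g$ of all the $g_l$).

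Finally I would estimate $(f_n\eta)^{(k)}-\psi^{(k)}$ on $\cJ(\eta)$. By the Leibniz rule,
\begin{align*}
(f_n\eta)^{(k)}-\psi^{(k)}=(f_n\eta)^{(k)}-(g\eta)^{(k)}=\sum_{j=0}^k\binom{k}{j}\big(f_n^{(j)}-g^{(j)}\big)\eta^{(k-j)}
\end{align*}
on each $J_l(\eta)$ (using $g\eta=\psi$ there). Since $\eta\in C_0^\infty(\dR)$, all derivatives $\eta^{(k-j)}$ are bounded, and $\cJ(\eta)$ has finite Lebesgue measure; hence $\|(f_n\eta)^{(k)}-\psi^{(k)}\|_{L^2(\cJ(\eta))}$ is bounded by a constant times $\max_{j\le k}\sup_{\cJ(\eta)}|f_n^{(j)}-g^{(j)}|$, which tends to $0$ as $n\to\infty$ for each $k=0,\dots,m$ by the uniform convergence obtained above. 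This gives the claimed $L^2$-convergence.

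The main obstacle is the construction of $g_l=\psi_l/\eta$ as a function of class $C^{(m)}$ (or better) up to and including the endpoints $\alpha_l,\beta_l$: one must use in an essential way both that $\psi_l$ has compact support strictly inside $(\alpha_l,\beta_l)$ — so that $1/\eta$ is only ever evaluated where $\eta$ is bounded away from $0$ — and that the resulting function, extended by zero, remains $C^{(m)}$ across the endpoints. Everything after that (the polynomial approximation via Lemma \ref{denselemma1}, patching to a common sequence, and the Leibniz-rule $L^2$-estimate exploiting $\operatorname{meas}\cJ(\eta)<\infty$ and boundedness of $\eta$ and its derivatives) is routine.
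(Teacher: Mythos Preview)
Your proposal is correct and follows essentially the same route as the paper: form the quotient $g=\psi/\eta$ (well-defined and $C^{(m)}$ because $\psi$ has compact support strictly inside each $(\alpha_l,\beta_l)$), approximate $g$ and its derivatives uniformly by polynomials via Lemma~\ref{denselemma1}, and conclude by the Leibniz rule using boundedness of $\eta^{(k-j)}$ and finiteness of $\operatorname{meas}\cJ(\eta)$. The only organizational difference is that the paper bypasses your interval-by-interval construction and patching step entirely: it extends $\psi$ by $0$ to all of $[\alpha,\beta]$ with $\alpha=\min_l\alpha_l$, $\beta=\max_l\beta_l$, observes that $\psi\eta^{-1}$ (set to $0$ where $\psi=0$) is then a single function in $C^{(m)}([\alpha,\beta])$, and applies Lemma~\ref{denselemma1} once on that interval --- which is exactly the ``common $C^{(m)}$ extension'' you propose at the end of your patching discussion.
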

\begin{proof}
By the assumption  $\psi$ vanishes in some neighbourhoods of the end points $\alpha_l$ and $\beta_l$. 
Set $\psi(x)=0$ on $\dR/ \cJ(\eta)$. Then, $\psi\eta^{-1}$ becomes a function\, of $ C^{(m)}([\alpha,\beta])$, where $\alpha:=\min_l \, \alpha_l$ and $\beta:=\max_l\, \beta_l$. Therefore, by Lemma \ref{denselemma1}, there exists  a sequence $(f_n)_{n\in \dN}$ of polynomials such that\, $f^{(j)}_n(x)\Longrightarrow (\psi \eta^{-1})^{(j)}(x) $\, for $j=0,\dots, m$\,  uniformly on\,  $[\alpha,\beta]$. Then
\begin{align*}
(f_n\eta)^{(k)}=\sum_{j=0}^k\, {k \choose j} f_n^{(j)}\eta^{(k-j)}\Longrightarrow \sum_{j=0}^k\, { k \choose j} 
(\psi \eta^{-1})^{(j)}\eta^{(k-j)} =\psi^{(k)}
\end{align*}
as\, $n\to \infty$\, uniformly on\, $[\alpha,\beta]$\,  and hence in $L^2(\cJ(\eta))$.
\end{proof}

Now we are able to give the\\
{\it Proof of Thorem \ref{weylstates}:}
Let us abbreviate $\pi_\varphi=\ov{\pi}_{f_\varphi}$ and $\pi_\psi=\ov{\pi}_{f_\psi}$. By Lemma \ref{gnseta} the closure $\pi_\psi=\ov{\pi}_{f_\psi}$ of the GNS representation $\pi_{f_\psi}$ is unitarily equivalent to the representation $\rho_\psi$. For notational simplicy we shall identify the representations $\pi_\psi$ and $\rho_\psi$ via the unitary $U$ defined in Lemma \ref{gnseta}. Using this description of $\pi_\psi\cong \rho_\psi$ it is straightforward to check that the domain $\cD((\pi_\psi)^*)$ consists of all functions $g\in C^\infty(\cJ(\psi))$ such that their restrictions to $\cJ_l(\psi)$ extend to functions of $C^\infty(\,\ov{\cJ_l(\psi)}\, )$ and $g(t)=0$ on $ \dR/\, \ov{\cJ(\psi)}$. Further, we have $(\pi_\psi)^*(f(q))g =f \cdot g $  and $(\pi_\psi)^*(p)g =-\ii g^\prime $ for $g \in \cD((\pi_\psi)^*)$ and  $f\in \dC[q]$.

Suppose that $T\in I(\pi_\varphi, (\pi_\psi)^*)$ and $\|T\|\leq 1$. Set $\xi:=T\varphi$. 
By the intertwining property of $T$,  for each polynomial $f$ we have
\begin{align}\label{intfq}
T(f\cdot\varphi)=T\pi_\varphi(f(q))\varphi =(\pi_\psi)^*(f(q))T\varphi=(\pi_\psi)^*(f(q))\xi=f\cdot \xi .
\end{align} 
Therefore, since $\|T\|\leq 1$, we  obtain
\begin{align}\label{tless}
 \int_\alpha^\beta |f(x)|^2|\xi(x)|^2dx= \int_\alpha^\beta |T(f\cdot \varphi)(x)|^2dx \leq \int_\alpha^\beta |f(x)|^2|\varphi(x)|^2 dx
\end{align}
for all polynomials $f$ and hence for all functions $f\in C[\alpha,\beta]$ by the Weierstrass theorem. Hence\, (\ref{tless}) implies that
\begin{align}\label{xivarphi}
|\xi(x)|\leq |\varphi(x)|\quad {\rm  on}\quad [\alpha,\beta].
\end{align} 
Therefore, $\xi(x)=0$ if\, $x\in \dR/\, \cJ(\varphi)$. Clearly, $\xi(x)=0$ if $x\in \dR/\, \ov{\cJ(\psi)}$, since $\xi\in \cD((\pi_\psi)^*)$. Since $\varphi$ satisfies condition $(*)$,  the set $\{f\cdot \varphi:f\in \dC[x]\}$ is dense in $L^2(\cJ(\varphi))=\Hh(\pi_\varphi)$.  Therefore, it follows from  (\ref{intfq}) that  $T$ is equal to the multiplication operator by the bounded function $\xi \varphi^{-1}$. (Note that $\xi \varphi^{-1}$ is bounded by (\ref{xivarphi}).) In particular, we obtain
 $$\varphi^\prime \cdot\xi \varphi^{-1} =T\varphi^\prime=T\pi_\varphi(\ii p)\varphi =(\pi_\psi)^*(\ii p)T\varphi= (\pi_\psi)^*(\ii p)\xi= \xi^\prime$$
Thus,\, $\varphi^\prime(x)\xi(x)=\varphi(x) \xi^\prime(x)$\, which in turn implies that\, $(\frac{\xi}{\varphi})^\prime(x)=0$\, for all $x\in \cJ(\varphi)\cap\cJ(\psi)$. Hence $\frac{\xi}{\varphi}$ is constant, say $\xi(x)=\lambda \varphi(x)$ for some constant $\lambda \in \dC$  on each connected component  of $\cJ(\varphi)\cap\cJ(\psi)$. By (\ref{xivarphi}),\, $|\lambda|\leq 1$. The connected components of the open set $\cJ(\varphi)\cap\cJ(\psi)$ are precisely the intervals $\cJ_l(\varphi)\cap\cJ_k(\psi)$  provided the latter  is not empty.

Conversely, suppose that for all indices $l,k$ such that\, $\cJ_l(\varphi)\cap \cJ_k(\psi)\neq \emptyset$\, a complex number $\lambda_{k,l}$, where\, $|\lambda_{k,l}|\leq 1$, is given. Set 
$\xi(x)=\lambda_{kl} \varphi(x)$ for   $x\in \cJ_l(\varphi)\cap\cJ_k(\psi)$  and  $\xi(x)=0$ otherwise. From the  description of the domain\, $\cD((\pi_\psi)^*)$\, given in the first paragraph of this proof it follows that $\xi\in \cD((\pi_\psi)^*)$. Define  $T(\pi_\varphi(a)\varphi):=(\pi_\psi)^*(a)\xi$, $a\in A$. It is easily checked that  $T$ extends by continuity to an operator $T$ of $\Hh(\pi_\varphi)=L^2(\cJ(\varphi))$ into $\Hh((\pi_\psi)^*)=L^2(\cJ(\psi))$ such that  $T\in I(\pi_\varphi,( \pi_\psi)^*)$\, and $\|T\|\leq 1$. Since $T\varphi=\xi$, we have 
$$\langle T\varphi,\psi\rangle=\sum\nolimits_{k,l}~ \lambda_{k,l} \int_{\cJ_k(\varphi)\cap\cJ_l(\psi)}\, \varphi(x) \ov{\psi(x)}\, dx.$$
Therefore, the supremum of  expressions\,  $|\langle T\varphi,\psi\rangle|$ is obtained if we choose  $\lambda_{k,l}$ such that the number 
$\lambda_{k,l} \int_{\cJ_k(\varphi)\cap\cJ_l(\psi)}\, \varphi \ov{\psi}\, dx$\, is  equal to its modulus\,  $|\int_{\cJ_k(\varphi)\cap\cJ_k(\psi)}\, \varphi \ov{\psi}\, dx|$. This implies  formula (\ref{vectorstates}).\hfill$\Box$

\bigskip

\noindent{\bf Acknowledgements.} The author would like to thank  P.M. Alberti for many fruitful discussions on transition probabilities.

\bibliographystyle{amsalpha}

\end{document}